\numberwithin{equation}{section}
\newtheoremstyle{Teorema}
{3pt}
{3pt}
{\slshape}
{}
{\bfseries}
{:}
{\newline}
{}
\newtheorem{theorem}{Theorem}[section]
\newtheorem{definition}[theorem]{Definition}
\newtheorem{lemma}[theorem]{Lemma}
\theoremstyle{definition}
\DeclareMathOperator{\supp}{supp}
\newcommand{\Z}{\mathbb{Z}}
\newcommand{\C}{\mathbb{C}}
\begin{document}


\title[Relative asymptotics of multiple orthogonal polynomials for Nikishin systems]
{Relative asymptotics of multiple orthogonal polynomials for Nikishin systems of two measures}

\author[A. L\'opez Garc\'{i}a]{A. L\'{o}pez Garc\'{i}a}
\address{A. L\'opez Garc\'{i}a\\
Department of Mathematics \\
University of Central Florida \\
4393 Andromeda Loop North \\
Orlando, FL 32816, USA}
\email{abey.lopez-garcia@ucf.edu}

\author[G. L\'{o}pez Lagomasino]{G. L\'{o}pez Lagomasino}\address{ G. L\'{o}pez Lagomasino\\
Departamento de Matem\'aticas\\
Universidad Carlos III de Madrid\\
Avda. Universidad 30, 28911 Legan\'{e}s, Spain}
\email{lago@math.uc3m.es}
\thanks{The second author received finantial support from the Spanish Ministerio de Ciencia, Innovaci\'{o}n y Universidades through grant PGC2018-096504-B-C33}

\begin{abstract} We study the relative asymptotics of two sequences of multiple orthogonal polynomials corresponding to two Nikishin systems of measures on the real line, the second one of which is obtained from the first one perturbing the generating measures with non-negative integrable functions. Each Nikishin system consists of two measures.

\end{abstract}	

\maketitle

\noindent\textbf{Keywords:} Nikishin system, multiple orthogonal polynomial, relative asymptotics, Szeg\H{o} function, fixed-point theorem.

\noindent\textbf{MSC 2020:}  Primary 42C05, 30E10; Secondary 41A21.

\section{Introduction}
\subsection{Background} Let $\mu$ be a finite positive Borel measure supported on $[-1,1]$ that satisfies the Szeg\H{o} condition
\begin{equation} \label{Sze-cond}
\int_{-1}^1 \log \mu'(x) \frac{dx}{\sqrt{1-x^2}} > -\infty,
\end{equation}
where $\mu'$ denotes the Radon-Nikodym derivative of $\mu$ with respect to Lebesgue measure on $[-1,1]$,
and let $p_n(z) = \kappa_n z^n + \cdots, \kappa_n > 0,$ be the orthonormal polynomial of degree $n$ with respect to $\mu$.
Szeg\H{o}'s theorem \cite[Theorem 12.1.2]{szego} states that
\begin{equation} \label{Sz-theorem1} \lim_{n \to \infty} \frac{p_n(z)}{(z + \sqrt{z^2-1})^n} = \frac{1}{\sqrt{2\pi}} S_\mu (z)
\end{equation}
uniformly on compact subsets of $\overline{\C} \setminus [-1,1]$, where $\sqrt{z^2-1}>0$ for $z>1$ and
\[ S_\mu (z) = \exp\left( \frac{\sqrt{z^2 -1}}{2\pi}   \int_{-1}^1  \frac{\log(\sqrt{1 - x^2}\,\mu'(x))}{x - z} \frac{dx}{\sqrt{1 - x^2}}\right)
\]
is the Szeg\H{o} function for the measure $\mu$.

Note that
\[
T_{n}(z)=\frac{1}{2} \left((z + \sqrt{z^2-1})^n +  (z - \sqrt{z^2-1})^n\right)
\]
is the $n$th degree Chebyshev polynomial of the first kind, which is orthogonal with respect to the measure $\frac{dx}{\sqrt{1-x^2}}$ on $[-1,1]$. The corresponding orthonormal polynomials are $t_{n}(x)=\sqrt{\frac{2}{\pi}}\,T_{n}(x)$, $n\geq 1$, $t_{0}(x)=\frac{1}{\sqrt{\pi}}$. On the other hand, $z + \sqrt{z^2-1}$ maps conformally $\overline{\C} \setminus [-1,1]$
onto the complement in $\overline{\C}$ of the closed unit disk. Therefore, \eqref{Sz-theorem1} is equivalent to
\begin{equation} \label{Sz-theorem2} \lim_{n \to \infty} \frac{p_n(z)}{t_n(z)} = S_\mu (z).
\end{equation}
Formula \eqref{Sz-theorem2} indicates that Szeg\H{o}'s theorem can be regarded as an asymptotic relation comparing two sequences of orthogonal polynomials. In general, one can pose the question of studying the relative asymptotics of two sequences of orthogonal polynomials associated with two  measures $\mu_1$ and $\mu_2$. If  $\mu_1$ and $\mu_2$ satisfy \eqref{Sze-cond} the answer is trivial. So, the interest lies in the case when at least one of these measures does not satisfy \eqref{Sze-cond}.

This type of problem was studied for the first time by A. A. Gonchar \cite{Gon1} in the particular case when $d\mu_2 = r\,d\mu_1$, and $r$ is a rational function with complex coefficients such that $r(\infty)=0$ and its poles lie in $\mathbb{C}\setminus[-1,1]$. In the context of positive perturbations $d\mu_2 = g\,d\mu_1$, $g\geq 0$, the problem of relative asymptotics was raised and studied by P. Nevai in \cite{Nevai1}.
Later, it was investigated in great detail (also on the unit circle) in a series of papers by A. M\'{a}t\'{e}, P. Nevai,  and V. Totik, see \cite{MNT1,MNT2,MNT3}, and independently  by E.A. Rakhmanov in \cite{Rak}. Among many results, M\'{a}t\'e-Nevai-Totik proved \cite[Theorem 11]{MNT2}, which we cite next.

Let $\mu$ be a finite positive Borel measure supported on $[-1,1]$ such that $\mu' > 0$ a.e. on that interval. Let $g$ be a non-negative $\mu$-integrable function on $[-1,1]$ such that $|q|g^{\pm 1} \in L^{\infty}(\mu)$ for some polynomial $q\not\equiv 0$. Let $(p_n)_{n\geq 0}$ and $(\widetilde{p}_n)_{\geq 0}$ be the sequences of orthonormal polynomials with respect to $\mu$ and $g\,d\mu$, respectively. Then
\begin{equation} \label{relative} \lim_{n \to \infty}
\frac{\widetilde{p}_n(z)}{p_n(z)}=S_g (z),
\end{equation}
uniformly on compact subsets of $\overline{\C} \setminus [-1,1]$, where
\[
S_g(z) = \exp\left( \frac{\sqrt{z^2 -1}}{2\pi}   \int_{-1}^1  \frac{\log g(x)}{x - z} \frac{d x}{\sqrt{1 - x^2}}\right),
\]
compare with \eqref{Sz-theorem2}. At present, it remains an open problem to find weaker conditions that guarantee the existence of the limit \eqref{relative}.

The M\'{a}t\'{e}-Nevai-Totik theorem was extended in \cite{Gui} to sequences of polynomials orthogonal with respect to varying measures (depending on the parameter $n$ indicating the degree of the polynomials) supported on a bounded interval of the real line, and used to prove the relative asymptotics of orthogonal polynomials for measures supported on unbounded intervals. A slightly more general theorem on the relative asymptotics of polynomials orthogonal with respect to varying measures, of which Lemma \ref{relescalar} below is a corollary, was proved in \cite{berndolyo} (see also \cite{BerLop2,BernardoGuillermo2}) and will be used here to obtain the relative asymptotics of polynomials that satisfy orthogonality relations with respect to a system of two measures. This extension of Szeg\H{o}'s theory is the purpose of this paper.

\subsection{Nikishin systems of two measures and multiple orthogonal polynomials}
Let $\sigma_{1}$ and $\sigma_{2}$ be a pair of finite positive Borel measures on the real line, each with compact and infinite support. Let $\Delta_{k}=\mathrm{Co}(\supp(\sigma_{k}))$ denote the convex hull of the support of $\sigma_{k}$, and assume that $\Delta_{1}\cap \Delta_{2}=\emptyset$. The measures $\sigma_{1}, \sigma_{2}$ will be used to generate our reference (unperturbed) Nikishin system. We say that $(s_{1}, s_{2})=\mathcal{N}(\sigma_{1}, \sigma_{2})$ is the Nikishin system generated by $(\sigma_{1}, \sigma_{2})$ if $s_{1}=\sigma_{1}$ and
\[
d s_{2}(x)=\int\frac{d\sigma_{2}(t)}{x-t}\,d\sigma_{1}(x).
\]
Note that $s_{1}$ and $s_{2}$ are both supported on $\Delta_{1}$.

In this paper, $\Z_{+}$ indicates the set of all non-negative integers. Consider the Nikishin system $(s_{1},s_{2}) = {\mathcal{N}}(\sigma_1,\sigma_2)$ and a multi-index $\mathbf{n}=(n_{1},n_{2})\in\Z_+^2$. It is well known (see \cite{FidLop}) that there exists a unique monic polynomial $Q_{\bf n}$ with $\deg Q_{\bf n}= |{\bf n}| := n_1 + n_2,$ such that
\begin{equation}\label{eqmultort}
\int x^{\nu} Q_{\bf n}(x)\, d s_{k}(x) = 0, \qquad \nu = 0,\ldots,n_k -1, \qquad k=1, 2.
\end{equation}
Now, let $\rho_k$, $k=1, 2$, be non-negative measurable functions defined on $\supp(\sigma_{k})$ such that $\rho_{k}\in L^{1}(\sigma_{k})$, and consider the perturbed Nikishin system $(\widetilde{s}_{1},\widetilde{s}_{2})={\mathcal{N}}(\widetilde{\sigma}_1,\widetilde{\sigma}_2)$, where $d\widetilde{\sigma}_k = \rho_k\, d\sigma_k$. Let $\widetilde{Q}_{\bf n}$ be the monic multiple orthogonal polynomial with respect to
${\mathcal{N}}(\widetilde{\sigma}_1,\widetilde{\sigma}_2)$ and $\mathbf{n}$. We are interested in finding appropriate conditions on the measures $\sigma_k$ and the functions $\rho_k$ which guarantee the existence of the limit
\[
\lim_{{\bf n} \in \Lambda} \frac{\widetilde{Q}_{\bf n}(z)}{{Q}_{\bf n}(z)}
\]
uniformly on compact subsets of $\overline{\C} \setminus \Delta_1$, where $\Lambda \subset \Z_+^2$ is an infinite sequence of distinct multi-indices. In other words, we want to find the relative asymptotics of the corresponding sequences of multiple orthogonal polynomials when we perturb a given Nikishin system of two measures.

This problem was studied in \cite{LL} when the perturbation is given by rational functions (see Theorems 1.1, 5.1, and Corollaries 4.1, 5.1 in the referred paper). Here, we will show that with assumptions analogous to those in the M\'{a}t\'e-Nevai-Totik theorem stated above, there is convergence and the limit is given in terms of appropriate Szeg\H{o} functions.

We will restrict our attention to the class of multi-indices given by
\[ \mathbb{Z}_+^2(\circledast) = \{{\bf n} = (n_1,n_2) \in \mathbb{Z}_+^2: n_2 \leq n_1
+1\}\,.
\]

\begin{definition} \label{secondtype}
The functions of the second kind associated with $\mathcal{N}(\sigma_1,\sigma_2)$ and $\mathbf{n}$ are
\[
\Psi_{{\bf n},0}(z)=Q_{{\bf n}}(z),\qquad\Psi_{{\bf n},k}(z)=\int
\frac{\Psi_{{\bf n},k-1}(x)}{z-x} d\sigma_k(x),\qquad k=1, 2.
\]
\end{definition}

If ${\bf n} \in \mathbb{Z}_+^2(\circledast)$, it is well known (see \cite[Proposition 3]{GRS}) that $\Psi_{{\bf n},k-1}$, $k=1, 2$, has exactly $N_{\mathbf{n},k}:=\sum_{j=k}^{2} n_{j}$ zeros in ${\C}\setminus\Delta_{k-1}$ $(\Delta_0 = \emptyset)$, they are all simple and lie on $\Delta_k$.
Let $Q_{{\bf n},k}$, $k=1, 2,$ be the monic polynomial of degree $N_{\mathbf{n},k}$ whose roots are the zeros of $\Psi_{{\bf n},k-1}$ on $\Delta_k$, and for notational convenience we set $Q_{\textbf{n},0}\equiv Q_{\textbf{n},3}\equiv 1$. Note that $Q_{\textbf{n},1}=Q_{\textbf{n}}$.  Analogously, let $\widetilde{Q}_{\textbf{n},k}$, $\widetilde{\Psi}_{\textbf{n},k}$, $0\leq k\leq 2$, denote the corresponding polynomials and functions of the second kind with respect to $\mathcal{N}(\widetilde{\sigma}_1, \widetilde{\sigma}_2)$.

Before we state our main result, we need to introduce some key definitions and constructions inspired by the works  \cite{sasha,luisyo,luisyo2}. Let $\Delta=(\Delta_{1},\Delta_{2})$, and set
\begin{align*}
\mathbf{C}_{\Delta} & :=\{(f_{1},f_{2}): f_{1}, f_{2}\,\,\textrm{are defined and continuous on}\,\,\Delta_{2}, \Delta_{1},\,\textrm{respectively}\},\\
\mathbf{C}_{\Delta}^{+} & :=\{(f_{1},f_{2})\in \mathbf{C}_{\Delta}: f_{1}>0\,\,\textrm{on}\,\,\Delta_{2}\,\,\textrm{and}\,\,f_{2}>0\,\,\textrm{on}\,\,\Delta_{1}\}.
\end{align*}
For a vector $\mathbf{f}=(f_{1},f_{2})\in\mathbf{C}_{\Delta}$, set
\[
\|\mathbf{f}\|_{\Delta}:=\max\{\|f_{1}\|_{\Delta_{2}}, \|f_{2}\|_{\Delta_{1}}\}
\]
where $\|\cdot\|_{\Delta_{j}}$ denotes the sup norm on $\Delta_{j}$. On $\mathbf{C}_{\Delta}^{+}$ we define the distance
\[
\mathrm{d}(\mathbf{f},\mathbf{g}):=\max\{\|\log(f_{1}/g_{1})\|_{\Delta_{2}}, \|\log(f_{2}/g_{2})\|_{\Delta_{1}}\}.
\]
It is easy to check that $(\mathbf{C}_{\Delta}^{+},\mathrm{d})$ is a complete metric space. Furthermore, if $(\mathbf{g}_{n})_{n\geq 1}$, $\mathbf{g}$ are vector functions in $\mathbf{C}_{\Delta}^{+}$, then
\begin{equation}\label{equivconv}
\lim_{n\rightarrow\infty}\|\mathbf{g}_{n}-\mathbf{g}\|_{\Delta}=0\quad \Leftrightarrow \quad \lim_{n\rightarrow\infty}\mathrm{d}(\mathbf{g}_{n},\mathbf{g})=0.
\end{equation}

If $\rho$ is a non-negative measurable function on the interval $[a,b]$ such that $\log\rho$ is integrable with respect to the measure
$dx/\sqrt{(b-x)(x-a)}$, we write
\begin{equation}\label{def:Szegofunc}
\mathsf{G}_{[a,b]}(\rho;z) := \exp\left(\frac{\sqrt{(z-b)(z-a)}}{2\pi}\int_{a}^{b}\frac{\log\rho(x) }{x-z}\frac{d x}{\sqrt{(b-x)(x-a)}}\right),\qquad z \in \overline{\mathbb{\C}} \setminus [a,b],
\end{equation}
where the square root outside the integral is positive for $z>b$. It is well known (see e.g. \cite{luisyo}) that $ \mathsf{G}(\rho;z)=\mathsf{G}_{[a,b]}(\rho;z)$ is analytic in $\overline{\mathbb{C}}\setminus[a,b]$ and satisfies
\[
\begin{cases}
\mathsf{G}(\rho; z)\neq 0\,\,\,\,\textrm{for}\,\,z\in \overline{\C}\setminus [a,b],\\
\mathsf{G}(\rho;\infty)>0,\\
\lim_{y \to 0} |\mathsf{G}(\rho;x+iy)|^2=\rho^{-1}(x)\,\,\,\, \textrm{for a.e.}\,\,x \in [a,b].
\end{cases}
\]

In order to determine the limiting functions in our main result, the following operator plays a central role.

\begin{definition} \label{def:T}
Let $\rho_{k}$, $k=1, 2,$ be non-negative measurable functions on $\Delta_{k}=\mathrm{Co}(\supp(\sigma_{k}))=[a_{k},b_{k}]$ such that $\log \rho_{k}$ is integrable with respect to the measure $dx/\sqrt{(b_{k}-x)(x-a_{k})}$. Define
\[ T: {\mathbf C}^+_{\Delta} \rightarrow {\mathbf C}^+_{\Delta}
\]
as follows. Given $\mathbf{f}=(f_{1},f_{2})\in {\mathbf C}^+_{\Delta}$, let $T(\mathbf{f})=\mathbf{f}^{*}$, where $\mathbf{f}^{*}=(f_{1}^{*},f_{2}^{*})$ and
\begin{align*}
f_{1}^{*}(x) & :=\mathsf{G}_{\Delta_{1}}(\rho_{1}/f_{2};x),\qquad x\in\Delta_{2},\\
f_{2}^{*}(x) & :=\mathsf{G}_{\Delta_{2}}(\rho_{2}/f_{1};x),\qquad x\in\Delta_{1},
\end{align*}
see \eqref{def:Szegofunc}.
\end{definition}

It is not difficult to show, using the maximum principle for harmonic functions, that the operator $T$ is a contraction on $({\mathbf C}^+_{\Delta},\mathrm{d})$ (for a proof in a more general setting, see \cite[Theorem 1.6]{luisyo2}). In fact, for all $\mathbf{f}, \mathbf{g}\in\mathbf{C}_{\Delta}^{+}$,
\begin{equation}\label{contractionprop}
\mathrm{d}(T(\mathbf{f}),T(\mathbf{g})) \leq \frac{1}{2}\, \mathrm{d}(\mathbf{f},\mathbf{g}).
\end{equation}
Therefore, by the Banach fixed-point theorem, $T$ has a unique fixed-point in ${\mathbf C}^+_{\Delta}$, which we denote by $\Phi=(\Phi_{1},\Phi_{2})\in{\mathbf C}^+_{\Delta}$. Since $\Phi_{k}$ is a Szeg\H{o} function, it has an analytic continuation to $\overline{\mathbb{C}}\setminus\Delta_{k}$, which we also denote by $\Phi_{k}$. Our main result is the following theorem.

\begin{theorem}\label{theomain}
Suppose that $\sigma_{k}'>0$ almost everywhere on $\Delta_{k}$, $k=1,2$. Let $\rho_{k}$ be non-negative measurable functions on $\Delta_{k}$ such that $\rho_{k}\in L^{1}(\sigma_{k})$, and assume that there exists a polynomial $q\not\equiv 0$ such that $|q|\rho_k^{\pm 1} \in L^{\infty}(\sigma_k)$ for $k=1,2$.  Let $\Lambda \subset \mathbb{Z}_+^2(\circledast)$ be an infinite sequence of distinct multi-indices such that $\sup_{{\bf n} \in \Lambda} (n_1 - n_2)<\infty$. Let $(\Phi_1,\Phi_2)$ be the unique fixed point of the operator $T$ given in Definition \ref{def:T}, associated with the weights $(\rho_{1},\rho_{2})$. We have
\begin{align}
\lim_{{\bf n} \in \Lambda} \frac{\widetilde{Q}_{{\bf n},k}(z)}{{Q}_{{\bf n},k}(z)} & = \frac{\Phi_{k}(z)}{\Phi_{k}(\infty)}, \qquad z\in\overline{\mathbb{C}}\setminus\Delta_{k},\quad k=1, 2,\label{limfund1}\\
\lim_{{\bf n} \in \Lambda} \frac{\widetilde{\Psi}_{{\bf n},1}(z)}{{\Psi}_{{\bf n},1}(z)} & = \frac{1}{\Phi_{1}(\infty)}\frac{\Phi_{2}(z)}{\Phi_1(z)},\qquad z\in\overline{\mathbb{C}}\setminus(\Delta_{1}\cup\Delta_{2}), \label{limfund2}\\
\lim_{{\bf n} \in \Lambda} \frac{\widetilde{\Psi}_{{\bf n},2}(z)}{{\Psi}_{{\bf n},2}(z)} & = \frac{1}{\Phi_{1}(\infty)\Phi_{2}(z)},\qquad z\in\overline{\mathbb{C}}\setminus\Delta_{2}, \label{limfund4}
\end{align}
uniformly on compact subsets of the indicated regions.
\end{theorem}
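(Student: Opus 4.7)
The plan is to reduce the coupled multiple orthogonality \eqref{eqmultort} to two scalar varying-weight orthogonalities, one on $\Delta_1$ and one on $\Delta_2$, apply the varying Mate-Nevai-Totik theorem (Lemma \ref{relescalar}) to each, and then identify the joint limits with the fixed point $(\Phi_1,\Phi_2)$ of the operator $T$ by a normal-family argument combined with the contraction \eqref{contractionprop}.

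The first step is the reduction. Using the Nikishin structure and standard Cauchy-transform identities in the spirit of \cite{FidLop,GRS,luisyo}, I would exhibit $Q_{\mathbf{n},1}$ as the monic orthogonal polynomial of degree $N_{\mathbf{n},1}$ on $\Delta_1$ with respect to a positive varying measure whose density relative to $\sigma_1$ carries the factor $|Q_{\mathbf{n},2}|^{-1}$ times a Cauchy-type factor that is analytic and bounded above and below on $\Delta_1$, and dually $Q_{\mathbf{n},2}$ as the monic orthogonal polynomial of degree $N_{\mathbf{n},2}$ on $\Delta_2$ with respect to a positive varying measure whose density relative to $\sigma_2$ depends on $\Psi_{\mathbf{n},1}$ (equivalently on $Q_{\mathbf{n},1}$) through an analogous Cauchy-type factor. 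The tilded polynomials $\widetilde{Q}_{\mathbf{n},k}$ satisfy the same scheme with $d\sigma_k$ replaced by $\rho_k\,d\sigma_k$ and with tilded quantities in the auxiliary factors, so the weight perturbation between the two problems is $\rho_k$ multiplied by a factor built from $\widetilde{Q}_{\mathbf{n},3-k}/Q_{\mathbf{n},3-k}$ (or, equivalently, by the corresponding ratio of second-type functions).

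Next, I would apply Lemma \ref{relescalar} to each of the two scalar problems. The infinitude of $\Lambda\subset\mathbb{Z}_+^2(\circledast)$ together with $\sup_{\Lambda}(n_1-n_2)<\infty$ and $n_2\le n_1+1$ forces $n_1,n_2\to\infty$ along $\Lambda$; combined with $\sigma_k'>0$ a.e.\ and $|q|\rho_k^{\pm 1}\in L^{\infty}(\sigma_k)$, this is exactly the data the lemma requires. A normal-family argument, with $|q|\rho_k^{\pm 1}\in L^{\infty}(\sigma_k)$ controlling $\widetilde{Q}_{\mathbf{n},k}/Q_{\mathbf{n},k}$ uniformly on compacts of $\overline{\mathbb{C}}\setminus\Delta_k$, yields precompactness of $\{\widetilde{Q}_{\mathbf{n},k}/Q_{\mathbf{n},k}\}_{\mathbf{n}\in\Lambda}$. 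Passing to a subsequence $\Lambda'\subset\Lambda$ along which both ratios ($k=1,2$) converge to limits $h_k$, Lemma \ref{relescalar} identifies each $h_k$ as the Szegő function $\mathsf{G}_{\Delta_k}(\rho_k/h_{3-k};z)$ up to the multiplicative constant enforcing $h_k(\infty)=1$ (numerator and denominator are monic of equal degree). The resulting pair of Szegő identities says exactly that $(\Phi_1(\infty)h_1|_{\Delta_2},\Phi_2(\infty)h_2|_{\Delta_1})$ is a fixed point of the operator $T$ of Definition \ref{def:T} in $\mathbf{C}_{\Delta}^{+}$; by \eqref{contractionprop} and the Banach fixed-point theorem, the only such fixed point is $(\Phi_1,\Phi_2)$, and the normalization forces $h_k=\Phi_k/\Phi_k(\infty)$. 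Independence of the limit from the subsequence gives \eqref{limfund1} along all of $\Lambda$.

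With \eqref{limfund1} in hand, \eqref{limfund2} and \eqref{limfund4} follow from the defining recurrences of Definition \ref{secondtype}, the factorization of $\Psi_{\mathbf{n},1}$ through $Q_{\mathbf{n},2}$ off $\Delta_2$, and the Szegő boundary relation $\lim_{y\to 0}|\mathsf{G}(\rho;x+iy)|^2=\rho^{-1}(x)$ listed after \eqref{def:Szegofunc}; inserting the already established convergence of $\widetilde{Q}_{\mathbf{n},k}/Q_{\mathbf{n},k}$ and combining with the analytic continuation properties of the $\Phi_k$ delivers the stated expressions. The main technical obstacle I anticipate lies in the reduction step: writing the two scalar varying weights in a form that genuinely fits the MNT framework, with uniform-in-$\mathbf{n}\in\Lambda$ control of positivity on $\Delta_k$, a.e.\ positive density with respect to $\sigma_k$, and a perturbation factor admitting the polynomial $q$ as in the hypotheses. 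Once that reduction is clean, the coupling through $T$ and the contraction \eqref{contractionprop} provide an essentially automatic identification of the limits.
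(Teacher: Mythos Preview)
Your high-level plan---reduce to two scalar varying-weight orthogonalities, apply Lemma~\ref{relescalar}, and identify the limits with $(\Phi_1,\Phi_2)$ via the contraction \eqref{contractionprop}---is natural and is indeed the skeleton of the paper's argument. The reduction you worry about is exactly what the paper carries out: \eqref{ortogonalidades}--\eqref{entreHnks} give the scalar orthogonalities, and Lemmas~\ref{limitehk}, \ref{limitehk2} and~\ref{relvectorial} package the application of Lemma~\ref{relescalar} in the form you want. The identification step (your subsequential limits $(h_1,h_2)$ solve the Szeg\H{o} fixed-point system, hence equal $(\Phi_1/\Phi_1(\infty),\Phi_2/\Phi_2(\infty))$) is also correct once you have convergence along a subsequence.

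The genuine gap is your precompactness claim. You assert that $|q|\rho_k^{\pm1}\in L^\infty(\sigma_k)$ ``controls $\widetilde{Q}_{\mathbf{n},k}/Q_{\mathbf{n},k}$ uniformly on compacts of $\overline{\mathbb{C}}\setminus\Delta_k$'', but no such a priori bound is available: both polynomials have degree $N_{\mathbf{n},k}\to\infty$ with zeros in $\Delta_k$, and a naive factor-by-factor estimate blows up. In the scalar case such bounds come from extremal/Christoffel-function comparisons, but here the two orthogonalities are coupled (the varying weight for $\widetilde{Q}_{\mathbf{n},k}$ involves $\widetilde{Q}_{\mathbf{n},3-k}$), and there is no obvious way to bootstrap uniform bounds on one ratio without already knowing them for the other. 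Your argument is circular at exactly this point: to invoke Lemma~\ref{relescalar} along a subsequence you need the auxiliary factor $Q_{\mathbf{n},3-k}/\widetilde{Q}_{\mathbf{n},3-k}$ to converge uniformly on $\Delta_k$ to something positive, and that is precisely the unproved precompactness.

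The paper circumvents this entirely. Instead of extracting subsequences, it introduces a finite-dimensional operator $\widetilde{T}_{\mathbf{n}}$ on $S_{\mathbf{n}}^+$ (Definition~\ref{operTn}) whose \emph{unique} fixed point is, up to explicit constants, the ratio vector $(\widetilde{Q}_{\mathbf{n},1}/Q_{\mathbf{n},1},\widetilde{Q}_{\mathbf{n},2}/Q_{\mathbf{n},2})$ (Theorem~\ref{fixedpointTntilde}). It then shows, using Lemma~\ref{relvectorial} together with the contraction \eqref{contractionprop}, that for every small $\varepsilon>0$ and all large $|\mathbf{n}|$ the operator $\widetilde{T}_{\mathbf{n}}$ maps a compact convex neighbourhood $\mathcal{B}(\varepsilon,\mathbf{n})$ of $\Phi$ into itself; Brouwer's (not Banach's) fixed-point theorem then \emph{produces} a fixed point inside that neighbourhood, which by uniqueness must be the ratio vector. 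No a priori bound on $\widetilde{Q}_{\mathbf{n},k}/Q_{\mathbf{n},k}$ is ever needed---the location of the fixed point is forced topologically. This is the idea you are missing, and it is the heart of the proof. Your ``main technical obstacle'' (the reduction step) is in fact routine; the real obstacle is the one you waved through.
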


Observe that the stated conditions guarantee that the function $\log\rho_{k}$ is integrable with respect to $dx/\sqrt{(b_{k}-x)(x-a_{k})}$.

A natural question is whether or not a result similar to Theorem \ref{theomain} is true for Nikishin systems generated by
an arbitrary number of $m \geq 2$ measures. This seems to be true, but there is a technical problem still to be solved which we explain at the end of the paper.

The paper is organized as follows. In Section \ref{sec:aux} we gather all the auxiliary results that are necessary for the proof of Theorem~\ref{theomain}. The proof of that theorem is given in Section \ref{section:proof}.

\section{Auxiliary results}\label{sec:aux}

Let
\begin{equation}\label{Hnk}
\mathcal{H}_{{\bf n},k}(z) := \frac{Q_{{\bf n},k-1}(z)\,\Psi_{{\bf n},k-1}(z)}{Q_{{\bf n},k}(z)},\qquad 1\leq k\leq 3.
\end{equation}
It is well known (see \cite{GRS}, or formulas (48) and (50) in \cite{BerLop}) and easy to verify that
\begin{equation}\label{ortogonalidades}
\int x^{\nu} Q_{{\bf n},k}(x) \frac{\mathcal{H}_{{\bf n},k}(x)\, d \sigma_k(x)}{Q_{{\bf n},k-1}(x)\,Q_{{\bf n},k+1}(x)} = 0,\qquad \nu = 0,\ldots,N_{\mathbf{n},k}-1, \qquad k=1, 2,
\end{equation}
and
\begin{equation}\label{entreHnks}
\mathcal{H}_{{\bf n},k+1}(z)  = \int   \frac{Q^2_{{\bf n},k}(x)}{z-x} \frac{\mathcal{H}_{{\bf n},k}(x)\,d\sigma_k(x)}{Q_{{\bf n},k-1}(x)\,Q_{{\bf n},k+1}(x)}, \qquad k=1, 2,\qquad \mathcal{H}_{\textbf{n},1}\equiv 1,
\end{equation}
$Q_{{\bf n},0}\equiv Q_{\textbf{n},3}\equiv 1$.

\begin{theorem}\label{unicidad} Given $\mathcal{N}(\sigma_1,\sigma_2)$ and ${\bf n}=(n_{1},n_{2})\in \mathbb{Z}_+^2(\circledast)$, there exists a unique vector of monic polynomials ${\mathbf{L}}_{\bf n} = (L_{{\bf n},1},L_{{\bf n},2})$ satisfying the following two properties: 1) for each $k=1, 2$, $\deg L_{{\bf n},k} = N_{\mathbf{n},k}$ and the zeros of $L_{\mathbf{n},k}$ are all contained in $\Delta_{k}$; 2) the orthogonality conditions
\begin{equation}\label{ortogonalidades:new}
\int x^{\nu} L_{{\bf n},k}(x) \frac{H_{{\bf n},k}(x)\, d \sigma_k(x)}{L_{{\bf n},k-1}(x)\,L_{{\bf n},k+1}(x)} = 0,\qquad \nu = 0,\ldots,N_{\mathbf{n},k}-1, \qquad k=1, 2,
\end{equation}
are satisfied, where the functions $H_{\mathbf{n},k}$ are defined recursively using
\begin{equation}\label{entreHnks:new}
H_{{\bf n},k+1}(z)  = \int   \frac{L^2_{{\bf n},k}(x)}{z-x} \frac{H_{{\bf n},k}(x)\,d\sigma_k(x)}{L_{{\bf n},k-1}(x)\,L_{{\bf n},k+1}(x)}, \qquad k=1, 2,\qquad H_{\mathbf{n},1}\equiv 1,
\end{equation}
starting from $H_{\mathbf{n},1}\equiv 1$ ($L_{{\bf n},0}\equiv L_{\mathbf{n},3}\equiv 1$).
\end{theorem}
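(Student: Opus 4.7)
The plan is to dispose of existence quickly by taking $L_{\mathbf{n},k}:=Q_{\mathbf{n},k}$ and $H_{\mathbf{n},k}:=\mathcal{H}_{\mathbf{n},k}$ from \eqref{Hnk}: the orthogonality \eqref{ortogonalidades} and the recursion \eqref{entreHnks} are exactly the required properties \eqref{ortogonalidades:new} and \eqref{entreHnks:new}, and the localization of the zeros on $\Delta_k$ is the result of \cite{GRS} recalled after Definition~\ref{secondtype}. The substantive content is uniqueness.

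For uniqueness, suppose $(L_{\mathbf{n},1},L_{\mathbf{n},2})$ is any candidate pair and $H_{\mathbf{n},k}$ is built from it via \eqref{entreHnks:new}. I would introduce the auxiliary functions of the second kind
\[
\Psi^L_{\mathbf{n},0}(z):=L_{\mathbf{n},1}(z),\qquad \Psi^L_{\mathbf{n},k}(z):=\int\frac{\Psi^L_{\mathbf{n},k-1}(x)}{z-x}\,d\sigma_k(x),\quad k=1,2,
\]
and prove by induction on $k$ the key identity
\[
\Psi^L_{\mathbf{n},k}(z)\;=\;\frac{H_{\mathbf{n},k+1}(z)\,L_{\mathbf{n},k+1}(z)}{L_{\mathbf{n},k}(z)},\qquad k=0,1,2.
\]
The base case is immediate. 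For the induction step, I bring both sides over a common denominator and rewrite the difference as a single integral whose integrand is $L_{\mathbf{n},k}(x)$ times a divided-difference kernel of the form $\frac{A(z)B(x)-A(x)B(z)}{z-x}$ built from the neighboring polynomials $L_{\mathbf{n},k-1},L_{\mathbf{n},k+1}$. That kernel is a polynomial in $x$ of degree at most $N_{\mathbf{n},k}-1$, so the orthogonality \eqref{ortogonalidades:new} makes the integral vanish. This identity is what decouples the implicit system \eqref{ortogonalidades:new}--\eqref{entreHnks:new} and is the main technical step.

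Once the identity is in hand, the behavior at infinity is forced: $\Psi^L_{\mathbf{n},1}(z)=O(z^{-n_1-1})$ and $\Psi^L_{\mathbf{n},2}(z)=O(z^{-n_2-1})$, because each $H_{\mathbf{n},k+1}$ is a Cauchy transform vanishing like $1/z$ at infinity, while $L_{\mathbf{n},k+1}/L_{\mathbf{n},k}$ contributes $z^{N_{\mathbf{n},k+1}-N_{\mathbf{n},k}}$. The first estimate reads off $\int x^\nu L_{\mathbf{n},1}(x)\,d\sigma_1(x)=0$ for $\nu<n_1$. For the second, I swap the two integrals in
\[
\int x^\nu\,\Psi^L_{\mathbf{n},1}(x)\,d\sigma_2(x)=\int L_{\mathbf{n},1}(t)\int\frac{x^\nu}{x-t}\,d\sigma_2(x)\,d\sigma_1(t),
\]
split $\frac{x^\nu}{x-t}=\frac{x^\nu-t^\nu}{x-t}+\frac{t^\nu}{x-t}$, and use the $\sigma_1$-orthogonality just obtained to discard the polynomial piece (of degree $\nu-1\le n_2-2$). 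This is precisely where the hypothesis $\mathbf{n}\in\mathbb{Z}_+^2(\circledast)$, i.e.\ $n_2\le n_1+1$, is used, so that $n_2-2\le n_1-1$. What remains yields $\int t^\nu L_{\mathbf{n},1}(t)\,ds_2(t)=0$ for $\nu<n_2$, and together with the $\sigma_1$-orthogonality this is exactly \eqref{eqmultort}. By the uniqueness of $Q_{\mathbf{n}}$ established in \cite{FidLop}, $L_{\mathbf{n},1}=Q_{\mathbf{n}}$.

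Since $L_{\mathbf{n},1}=Q_{\mathbf{n}}$ gives $\Psi^L_{\mathbf{n},1}=\Psi_{\mathbf{n},1}$, the identity $\Psi_{\mathbf{n},1}(z)\,L_{\mathbf{n},1}(z)=H_{\mathbf{n},2}(z)\,L_{\mathbf{n},2}(z)$ combined with the fact that $H_{\mathbf{n},2}$ has constant sign on $\Delta_2$ (transparent from the defining integral in \eqref{entreHnks:new}, since $\Delta_1\cap\Delta_2=\emptyset$) forces the zeros of $L_{\mathbf{n},2}$ on $\Delta_2$ to coincide with those of $\Psi_{\mathbf{n},1}$ on $\Delta_2$, i.e.\ with the zeros of $Q_{\mathbf{n},2}$. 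Both polynomials being monic of degree $n_2$, $L_{\mathbf{n},2}=Q_{\mathbf{n},2}$, and then \eqref{entreHnks:new} gives $H_{\mathbf{n},k}=\mathcal{H}_{\mathbf{n},k}$. The main obstacle I expect is the verification of the key identity $\Psi^L_{\mathbf{n},k}=H_{\mathbf{n},k+1}L_{\mathbf{n},k+1}/L_{\mathbf{n},k}$, which is what turns the nonlinear coupled system into a classical scalar multi-orthogonality problem; the bookkeeping of the $1/z$-expansions, where the class $\mathbb{Z}_+^2(\circledast)$ is genuinely used, is the other delicate point.
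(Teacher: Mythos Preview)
Your proposal is correct and follows essentially the same strategy as the paper: the paper defines $\psi_k:=H_{\mathbf{n},k+1}L_{\mathbf{n},k+1}/L_{\mathbf{n},k}$ and derives the Cauchy-transform recursion $\psi_k(z)=\int\frac{\psi_{k-1}(x)}{z-x}\,d\sigma_k(x)$ from the orthogonality, whereas you start from that recursion (your $\Psi^L_{\mathbf{n},k}$) and prove it equals the ratio --- the same identity, established in the opposite direction. For the passage to multi-orthogonality with respect to $(s_1,s_2)$ the paper uses the algebraic relation $\varphi_2=\widehat{\sigma}_2\,\psi_1-\psi_2$, while you expand the double integral and split $\tfrac{x^\nu}{x-t}$; these are equivalent computations, and both invoke $n_2\le n_1+1$ at exactly the same step.
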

\begin{proof}
The existence of the required vector follows from \eqref{ortogonalidades} and \eqref{entreHnks}. Let ${\mathbf{L}}_{\bf n} = (L_{{\bf n},1},L_{{\bf n},2})$ be an arbitrary vector satisfying properties $1)$ and $2)$. We show first that $L_{\mathbf{n},1}$ must be the monic multiple orthogonal polynomial with respect to $\mathcal{N}(\sigma_1,\sigma_2)$ and $\mathbf{n}$, i.e. it satisfies \eqref{eqmultort}.

From \eqref{entreHnks:new} we easily deduce that for each $1\leq k\leq 3$, the function $H_{\mathbf{n},k}$ is analytic in $\mathbb{C}\setminus\Delta_{k-1}$ ($\Delta_{0}=\emptyset$), it never vanishes in this region, and it is real-valued on $\mathbb{R}\setminus\Delta_{k-1}$. Let us define the functions
\begin{equation}\label{defpsik}
\psi_{k}(z):=\frac{L_{\textbf{n},k+1}(z)\,H_{\textbf{n},k+1}(z)}{L_{\textbf{n},k}(z)},\qquad 0\leq k\leq 2.
\end{equation}
We have $\psi_{0}=L_{\textbf{n},1}$, and it follows from our assumptions that $\psi_{k}$ is analytic in $\mathbb{C}\setminus\Delta_{k}$, and
\begin{equation}\label{estinfpsik}
\psi_{k}(z)=O\left(\frac{1}{z^{n_{k}+1}}\right),\quad z\rightarrow\infty,\qquad k=1, 2.
\end{equation}
Using the notation in \eqref{defpsik}, formulas \eqref{ortogonalidades:new} and \eqref{entreHnks:new} can be rewritten as
\begin{equation}\label{newexp1}
\int x^{\nu}\psi_{k-1}(x)\,\frac{d\sigma_{k}(x)}{L_{\textbf{n},k+1}(x)}=0,\qquad 0\leq \nu\leq N_{\textbf{n},k}-1,\qquad k=1, 2,
\end{equation}
and
\begin{equation}\label{newexp2}
\psi_{k}(z)=\frac{L_{\textbf{n},k+1}(z)}{L_{\textbf{n},k}(z)}\int\frac{L_{\textbf{n},k}(x)}{z-x}\,\psi_{k-1}(x)\,\frac{d\sigma_{k}(x)}{L_{\textbf{n},k+1}(x)},\qquad k=1, 2.
\end{equation}

Since $\deg{L_{\textbf{n},k}}=N_{\textbf{n},k}$, from \eqref{newexp1} we obtain
\[
\int\frac{L_{\textbf{n},k}(z)-L_{\textbf{n},k}(x)}{z-x}\,\psi_{k-1}(x)\,\frac{d\sigma_{k}(x)}{L_{\textbf{n},k+1}(x)}=0
\]
which gives
\[
L_{\textbf{n},k}(z)\int\frac{\psi_{k-1}(x)}{z-x}\,\frac{d\sigma_{k}(x)}{L_{\textbf{n},k+1}(x)}=\int\frac{L_{\textbf{n},k}(x)}{z-x}\,\psi_{k-1}(x)\,\frac{d\sigma_{k}(x)}{L_{\textbf{n},k+1}(x)}
\]
so by \eqref{newexp2} we get
\begin{equation}\label{recformpsi}
\int\frac{\psi_{k-1}(x)}{z-x}\,\frac{d\sigma_{k}(x)}{L_{\textbf{n},k+1}(x)}=\frac{\psi_{k}(z)}{L_{\textbf{n},k+1}(z)}.
\end{equation}
Similarly, since $\deg L_{\textbf{n},k+1}\leq N_{\textbf{n},k}$, from \eqref{newexp1} we obtain
\[
L_{\textbf{n},k+1}(z)\int\frac{\psi_{k-1}(x)}{z-x}\,\frac{d\sigma_{k}(x)}{L_{\textbf{n},k+1}(x)}=\int\frac{\psi_{k-1}(x)}{z-x}\,d\sigma_{k}(x)
\]
which combined with \eqref{recformpsi} implies
\begin{equation}\label{iterpsiform}
\psi_{k}(z)=\int\frac{\psi_{k-1}(x)}{z-x}\,d\sigma_{k}(x),\qquad k=1, 2.
\end{equation}

We consider now the functions
\[
\varphi_{k}(z):=\int\frac{L_{\textbf{n},1}(x)}{z-x}\,d s_{k}(x),\qquad k=1, 2.
\]
Observe that $\varphi_{1}=\psi_{1}$ and
\[
\varphi_{2}(z)=\iint\frac{L_{\mathbf{n},1}(x)}{(z-x)(x-t)}d\sigma_{1}(x)\,d\sigma_{2}(t),\qquad \psi_{2}(z)=\iint\frac{L_{\mathbf{n},1}(x)}{(z-t)(t-x)}d\sigma_{1}(x)\,d\sigma_{2}(t),
\]
hence
\begin{equation}\label{relphi2psi2}
\varphi_{2}(z)=\widehat{\sigma}_{2}(z)\,\psi_{1}(z)-\psi_{2}(z)
\end{equation}
where
\[
\widehat{\sigma}_{2}(z)=\int\frac{d\sigma_{2}(t)}{z-t}.
\]
We conclude from \eqref{estinfpsik} and \eqref{relphi2psi2} that
\[
\varphi_{k}(z)=O\left(\frac{1}{z^{n_{k}+1}}\right),\quad z\rightarrow\infty,\qquad k=1, 2.
\]
This is equivalent to the condition that $L_{\textbf{n},1}$ is the monic multiple orthogonal polynomial with respect to $(s_{1},s_{2})$ and $\textbf{n}$, thus $L_{\textbf{n},1}$ is unique. It then follows from \eqref{iterpsiform} and $\psi_{0}=L_{\textbf{n},1}$ that
the functions $\psi_{k}$ are uniquely determined. From \eqref{defpsik} we deduce that the roots of $L_{\textbf{n},2}$ are the zeros of $\psi_{1}$ in its region of analyticity, so the polynomial $L_{\textbf{n},2}$ is also uniquely determined.
\end{proof}

\begin{definition} \label{defKkappa}
Given the polynomials ${\mathbf{Q}}_{\bf n} = (Q_{{\bf n},1}, Q_{{\bf n},2})$ associated with $\mathcal{N}(\sigma_1,\sigma_2)$ and $\mathbf{n}$, set
\begin{equation}\label{defKnk}
K_{{\bf n},0}=1, \qquad K_{{\bf n},k}= \left|  \int {Q}^2_{{\bf n},k}(x)\frac{\mathcal{H}_{{\bf n},k}(x)\, d\sigma_k(x)}{ {Q}_{{\bf n},k-1}(x)\, {Q}_{{\bf n},k+1}(x)}\right|^{-1/2} \qquad k=1, 2,
\end{equation}
where ${Q}_{{\bf n},0}\equiv{Q}_{{\bf n},3}\equiv1$. Take
\begin{equation}\label{def:kappas}
\kappa_{{\bf n},k}  = \frac{K_{{\bf n},k} }{K_{{\bf n},k-1}}, \qquad k=1, 2.
\end{equation}
Define
\begin{align}
{q}_{{\bf n},k} & = \kappa_{{\bf n},k}\,{Q}_{{\bf n},k}, \qquad k=1, 2,\label{def:littleqnk}\\
h_{{\bf n},k}(z) & = K_{{\bf n},k-1}^2 \mathcal{H}_{{\bf n},k}(z), \qquad 1\leq k\leq 3.\label{def:littlehnk}
\end{align}
\end{definition}

With this notation, $ {q}_{{\bf n},k}$ is the orthonormal polynomial of degree $N_{\textbf{n},k}$ with respect to the varying measure
\begin{equation}\label{varyingmeasqnk}
{|h_{{\bf n},k}(x)|\,d\sigma_k(x)}/{|{Q}_{{\bf n},k-1}(x)\,{Q}_{{\bf n},k+1}(x)}|
\end{equation}
and $ {Q}_{{\bf n},k}$ is the corresponding monic orthogonal polynomial. Note that the measure
\begin{equation}\label{varyingmeasqnk:2}
{h_{{\bf n},k}(x)\,d\sigma_k(x)}/{{Q}_{{\bf n},k-1}(x)\,{Q}_{{\bf n},k+1}(x)}
\end{equation}
has constant sign on the interval $\Delta_{k}$. We define $\varepsilon_{\textbf{n},k}=\pm 1$ as the sign of this measure.

The following result is a consequence of Proposition 3.1 in \cite{LL}. It can also be derived from Lemma \ref{limitehk2} below.

\begin{lemma}\label{limitehk} Suppose that $\sigma'_k > 0$ a.e. on $\Delta_k=\mathrm{Co}(\supp(\sigma_{k}))=[a_{k},b_{k}]$, $k=1, 2$. Let $\Lambda \subset \Z_+^2(\circledast)$ be an infinite sequence of distinct multi-indices such that there exists a constant $c\geq 0$ for which $n_1 \leq n_{2} + c$, for all ${\bf n}=(n_{1},n_{2})\in\Lambda$. We have
\begin{equation}\label{limenkhnk}
\lim_{{\bf n} \in \Lambda} \varepsilon_{\mathbf{n},k-1}\,h_{{\bf n},k}( z) = \frac{1}{\sqrt{(z-a_{k-1})(z-b_{k-1})}}, \qquad k=2, 3,
\end{equation}
uniformly on compact subsets of $\overline{\C} \setminus \Delta_{k-1}$. The branch of the square root is taken so that $\sqrt{x}>0$ for $x>0$.
\end{lemma}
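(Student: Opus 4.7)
The plan is to rewrite $\varepsilon_{\mathbf{n},k-1}h_{\mathbf{n},k}(z)$ as the Cauchy-type integral of the squared orthonormal polynomial $q_{\mathbf{n},k-1}^2$ against its own orthogonality measure, at which point the scalar Szeg\H{o}-type asymptotics for orthonormal polynomials with respect to varying measures (Lemma \ref{relescalar}, drawn from \cite{berndolyo}) delivers the claim. Starting from \eqref{entreHnks} with $k$ replaced by $k-1$ and multiplying through by $K_{\mathbf{n},k-1}^2$, the identity $K_{\mathbf{n},k-1}^2\,Q_{\mathbf{n},k-1}^2 = K_{\mathbf{n},k-2}^2\,q_{\mathbf{n},k-1}^2$ (immediate from \eqref{def:kappas}--\eqref{def:littleqnk}) together with \eqref{def:littlehnk} converts the recursion into
\[
\varepsilon_{\mathbf{n},k-1}\,h_{\mathbf{n},k}(z) \;=\; \int \frac{q_{\mathbf{n},k-1}^2(x)}{z-x}\,d\mu_{\mathbf{n},k-1}(x),\qquad d\mu_{\mathbf{n},k-1}(x) := \varepsilon_{\mathbf{n},k-1}\,\frac{h_{\mathbf{n},k-1}(x)\,d\sigma_{k-1}(x)}{Q_{\mathbf{n},k-2}(x)\,Q_{\mathbf{n},k}(x)},
\]
where $d\mu_{\mathbf{n},k-1}$ is precisely the positive varying measure \eqref{varyingmeasqnk} on $\Delta_{k-1}$ with respect to which $q_{\mathbf{n},k-1}$ is the orthonormal polynomial of degree $N_{\mathbf{n},k-1}$.

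Using orthogonality of $q_{\mathbf{n},k-1}$ against polynomials of lower degree, the right-hand side factors as $q_{\mathbf{n},k-1}(z)$ times the function of the second kind of $q_{\mathbf{n},k-1}$; the explicit Chebyshev computation shows that in the prototype case this product equals $1/\sqrt{(z-a_{k-1})(z-b_{k-1})}$ exactly, and classical Szeg\H{o}-type asymptotics for orthonormal polynomials with respect to varying positive measures --- valid here because $\sigma_{k-1}'>0$ a.e. on $\Delta_{k-1}$ --- then yield
\[
\lim_{\mathbf{n}\in\Lambda}\varepsilon_{\mathbf{n},k-1}\,h_{\mathbf{n},k}(z) \;=\; \frac{1}{\sqrt{(z-a_{k-1})(z-b_{k-1})}}
\]
uniformly on compact subsets of $\overline{\mathbb{C}}\setminus\Delta_{k-1}$. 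This is exactly the content of Lemma \ref{relescalar} applied to the triple consisting of base measure $\sigma_{k-1}$, varying weight $\varepsilon_{\mathbf{n},k-1}h_{\mathbf{n},k-1}/(Q_{\mathbf{n},k-2}Q_{\mathbf{n},k})$, and orthonormal polynomial $q_{\mathbf{n},k-1}$.

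The hypotheses of Lemma \ref{relescalar} would be verified inductively on $k$. For $k=2$, one has $h_{\mathbf{n},1}\equiv 1$ and $Q_{\mathbf{n},0}\equiv 1$, so only the behavior of $|Q_{\mathbf{n},2}|$ on $\Delta_1$ enters, and this is controlled by the classical $n$-th root (and ratio) asymptotics of the second polynomial in a Nikishin multi-index system (Proposition 3.1 of \cite{LL}), which applies under the near-diagonal constraint $n_1 \leq n_2+c$. For $k=3$ the already-proved $k=2$ case furnishes uniform convergence of the smooth factor $\varepsilon_{\mathbf{n},1}h_{\mathbf{n},2}$ on $\Delta_2$ to the positive continuous function $1/\sqrt{(x-a_1)(x-b_1)}$, while the asymptotics of $|Q_{\mathbf{n},1}|$ on $\Delta_2$ come from the same theory. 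The main technical obstacle is thus external to the plan itself: it lies in the control of the $N_{\mathbf{n},j}$-th root behavior of $|Q_{\mathbf{n},j}|$ off $\Delta_j$ uniformly along $\Lambda$, which is precisely the Nikishin-specific input encapsulated in the cited Proposition 3.1 of \cite{LL}.
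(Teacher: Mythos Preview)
Your overall strategy --- rewrite $\varepsilon_{\mathbf{n},k-1}h_{\mathbf{n},k}(z)$ as the Cauchy integral of $q_{\mathbf{n},k-1}^{2}$ against its own varying orthogonality measure, then pass to the limit inductively in $k$ --- is exactly the route the paper takes in proving the more general Lemma~\ref{limitehk2}, from which Lemma~\ref{limitehk} follows by specializing $\widetilde{\mathbf{L}}_{\mathbf{n}}=(Q_{\mathbf{n},1},Q_{\mathbf{n},2})$ and $\rho_k\equiv 1$. Your integral representation is correct.

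The gap is in the second step: Lemma~\ref{relescalar} is the wrong tool. That lemma compares two orthonormal sequences via the ratio $\widetilde{p}_{n,n+r}/p_{n,n+r}$ and outputs a Szeg\H{o} function; it says nothing about the limit of $\int q^{2}(x)/(z-x)\,d\mu(x)$, nor about the individual asymptotics of $q$ or of its second-kind function. What is actually needed is a \emph{weak convergence} statement: that $q_{\mathbf{n},k-1}^{2}\,d\mu_{\mathbf{n},k-1}$ converges weak-$*$ to the Chebyshev measure $\pi^{-1}\,dx/\sqrt{(b_{k-1}-x)(x-a_{k-1})}$ on $\Delta_{k-1}$. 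Taking $f(x)=(z-x)^{-1}$ then yields \eqref{limenkhnk} directly, since
\[
\frac{1}{\pi}\int_{\Delta_{k-1}}\frac{1}{z-x}\,\frac{dx}{\sqrt{(b_{k-1}-x)(x-a_{k-1})}}=\frac{1}{\sqrt{(z-a_{k-1})(z-b_{k-1})}}.
\]
This is precisely Theorem~8 of \cite{BerLop}, and it is what the paper invokes in the proof of Lemma~\ref{limitehk2}. Its hypotheses require only that $\sigma_{k-1}'>0$ a.e., that the zeros of the denominator polynomials stay uniformly bounded away from $\Delta_{k-1}$ (automatic here: the zeros of $Q_{\mathbf{n},k-2}$ and $Q_{\mathbf{n},k}$ lie in $\Delta_{k-2}$ and $\Delta_{k}$, both disjoint from $\Delta_{k-1}$), that the continuous part of the varying weight converges uniformly (trivial for $k=2$; for $k=3$ this is the already-proved $k=2$ case), and the degree bound $\deg(Q_{\mathbf{n},k-2}Q_{\mathbf{n},k})\leq 2N_{\mathbf{n},k-1}+O(1)$, for which the constraint $n_{1}\leq n_{2}+c$ is used when $k=3$. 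No $n$-th-root or ratio asymptotics of the $Q_{\mathbf{n},j}$ are required, so your appeal to Proposition~3.1 of \cite{LL} for that purpose is unnecessary --- and somewhat circular, since the paper states that this very lemma is already a consequence of that proposition.
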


We will use some known results on the relative asymptotics of orthogonal polynomials with respect to varying measures in the scalar case. For convenience of the reader we state the result that we need with a degree of generality sufficient in our context. The statement of Lemma~\ref{relescalar} below is a corollary of \cite[Theorem 2]{berndolyo}. If $(p_{n})_{n \in \Lambda}$ is a sequence of polynomials, we say that the zeros of $(p_{n})_{n\in \Lambda}$ are uniformly bounded away from a compact set $K\subset\mathbb{C}$ if there exists $\delta>0$ such that $\textrm{dist}(Z(p_{n}),K)>\delta$ for all $n\in \Lambda$, where $Z(p_{n})$ is the set of zeros of $p_{n}$ and $\textrm{dist}(\cdot,\cdot)$ denotes the Euclidean distance between the indicated sets.

\begin{lemma}\label{relescalar} Assume that:
\begin{itemize}
\item[i)] $\mu$ is a finite positive Borel measure supported on a compact interval $\Delta$ of the real line with $\mu' > 0$ a.e. on $\Delta$,
\item[ii)] $\rho$  is a non-negative $\mu$-integrable function on $\Delta$ such that $|q|\rho^{\pm 1} \in L^{\infty}(\mu)$ where $q$ is a non-zero polynomial,
\item[iii)]  $(w_{2n})_{n\geq 0}$, $\deg w_{2n} \leq 2n$, is a sequence of polynomials with real coefficients whose zeros are uniformly bounded away from $\Delta$,
\item[iv)] $(g_{n})_{n\geq 0}, (\widetilde{g}_{n})_{n\geq 0}$ are sequences of non-negative continuous functions on $\Delta$
which converge uniformly on $\Delta$ to positive functions $g$ and $\widetilde{g}$, respectively.
\end{itemize}
Let $p_{n,l}$ and $\widetilde{p}_{n,l}$ be the orthonormal polynomials of degree $l$ with respect to the varying measures $g_n\,d\mu/|w_{2n}|$ and $\rho\,\widetilde{g}_n\,g_n\,d\mu/|w_{2n}|$, respectively. Then, for every $r\in\mathbb{Z}$ fixed, we have
\[ \lim_{n\rightarrow\infty} \frac{\widetilde{p}_{n,n+r}(z)}{p_{n,n+r}(z)}=\mathsf{G}_{\Delta}(\widetilde{g}\,\rho;z)
\]
uniformly on compact subsets of $\overline{\C} \setminus \Delta$.
\end{lemma}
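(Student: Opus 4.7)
The plan is to deduce Lemma \ref{relescalar} as a direct corollary of \cite[Theorem 2]{berndolyo}, which is the varying-measure extension of the M\'at\'e--Nevai--Totik relative asymptotics theorem. The key observation is that both $p_{n,n+r}$ and $\widetilde{p}_{n,n+r}$ are orthonormal with respect to varying measures that share a common base
\[
d\mu_n := g_n\,d\mu/|w_{2n}|,
\]
and differ only by the multiplicative factor $h_n := \widetilde{g}_n\,\rho$; indeed $d\widetilde{\mu}_n = h_n\,d\mu_n$. The problem therefore reduces to a single MNT-type perturbation on the varying base $\mu_n$, with effective perturbation $h_n$ whose pointwise limit is the function $\widetilde{g}\,\rho$.

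First I would verify that the varying base $(\mu_n)$ satisfies the regularity hypotheses required by \cite[Theorem 2]{berndolyo}. This rests on three ingredients: $\mu'>0$ a.e. on $\Delta$ (so $\mu$ is regular in the Erd\H{o}s--Tur\'an sense); the uniform convergence $g_n \to g$ on $\Delta$ with $g>0$ continuous, which gives that $g_n$ is bounded above and below by positive constants uniformly in $n$ (for $n$ large); and the zeros of $w_{2n}$ being at distance at least some $\delta>0$ from $\Delta$, so $1/|w_{2n}|$ behaves as a well-controlled varying weight with a limiting logarithmic contribution coming from the asymptotic zero distribution of $w_{2n}$. Next I would verify the M\'at\'e--Nevai--Totik condition on the perturbation $h_n$: non-negativity and $\mu_n$-integrability are clear; the uniform convergence of $\widetilde{g}_n$ combined with the fact that $\rho$ is fixed yields $h_n\to\widetilde{g}\,\rho$ in the appropriate sense on $\Delta$; and the crucial $|q|\,h_n^{\pm 1}\in L^{\infty}$ bound is inherited from the hypothesis on $\rho$, because $\widetilde{g}_n^{\pm 1}$ is uniformly bounded and therefore
\[
|q|\,h_n^{\pm 1} \;\leq\; C\,|q|\,\rho^{\pm 1}\qquad \text{uniformly in } n,
\]
with $C:=\sup_n\max\bigl(\|\widetilde{g}_n\|_{\Delta},\|1/\widetilde{g}_n\|_{\Delta}\bigr)<\infty$. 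Applying \cite[Theorem 2]{berndolyo} then yields the desired limit $\mathsf{G}_\Delta(\widetilde{g}\,\rho;z)$ uniformly on compact subsets of $\overline{\C}\setminus\Delta$.

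The main obstacle is confirming that the hypotheses of \cite[Theorem 2]{berndolyo} align precisely with our situation, the delicate point being that the uniform bounds on $\widetilde{g}_n^{\pm 1}$ must combine cleanly with the MNT-type control $|q|\rho^{\pm 1}\in L^{\infty}(\mu)$ to produce a single uniform MNT hypothesis on the effective weight $\widetilde{g}_n\rho$, and that the common varying factor $g_n/|w_{2n}|$ should contribute no extra term to the limiting Szeg\H{o} function, as it cancels in the ratio of orthonormal polynomials. Once these bookkeeping checks are settled, the cited theorem applies verbatim and the lemma follows.
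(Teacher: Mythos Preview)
Your proposal is correct and matches the paper's approach exactly: the paper does not give a proof of this lemma but simply states that it is a corollary of \cite[Theorem~2]{berndolyo}, adding only the remark that the condition $|q|\rho^{\pm 1}\in L^{\infty}(dx)$ required there follows from hypotheses $i)$ and $ii)$. Your verification that the varying base $g_n\,d\mu/|w_{2n}|$ and the effective perturbation $\widetilde{g}_n\rho$ fit the hypotheses of that theorem is precisely the bookkeeping implicit in the paper's one-line citation.
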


Note that in \cite[Theorem 2]{berndolyo} it is required that $|q|\rho^{\pm 1} \in L^{\infty}(dx)$, but this condition follows from $i)$ and $ii)$.

In the rest of this section we assume that the following conditions hold for each $k=1, 2$:
\begin{itemize}
\item[c1)] $\sigma_k'>0$ a.e. on $\Delta_k=\supp(\sigma_{k})= [a_{k},b_{k}]$;
\item[c2)] $\rho_{k}$ is a non-negative function on $\Delta_{k}$ such that $\rho_{k}\in L^{1}(\sigma_{k})$,
and there exists a non-zero polynomial $q$ such that  $|q|\rho_k^{\pm 1} \in L^{\infty}(\sigma_k)$.
\end{itemize}
These conditions imply in particular that $\rho_{k}>0$ a.e. on $\Delta_{k}$. Associated with the Nikishin system $(\widetilde{s}_{1},\widetilde{s}_{2}) = {\mathcal{N}}(\widetilde{\sigma}_1,\widetilde{\sigma}_2)$, where $d\widetilde{\sigma}_k = \rho_k\,d\sigma_k$, we have the corresponding monic orthogonal polynomials $\widetilde{Q}_{{\bf n},k}$, the orthonormal ones $\widetilde{q}_{{\bf n},k}$, orthonormalizing constants $\widetilde{\kappa}_{{\bf n},k}, \widetilde{K}_{{\bf n},k}$, and functions $\widetilde{\mathcal{H}}_{{\bf n},k},\widetilde{h}_{{\bf n},k}$
defined as the analogous ones without tilde. In the course of our study we will obtain the limits
\[ \lim_{{\bf n} \in \Lambda} \frac{\widetilde{Q}_{{\bf n},k}}{{Q}_{{\bf n},k}},\qquad  \lim_{{\bf n} \in \Lambda} \frac{\widetilde{q}_{{\bf n},k}}{{q}_{{\bf n},k}},\qquad \lim_{{\bf n} \in \Lambda} \frac{\widetilde{\kappa}_{{\bf n},k}}{{\kappa}_{{\bf n},k}},\qquad \lim_{{\bf n} \in \Lambda}\frac{\widetilde{\Psi}_{{\bf n},k}}{\Psi_{{\bf n},k}}
\]
where $\Lambda \subset \mathbb{Z}_+^2(\circledast)$ is an infinite sequence of distinct multi-indices.

For the study of this problem we adapt a method devised by A.I. Aptekarev \cite{sasha} to analyze the strong asymptotics of type II multiple orthogonal polynomials of a Nikishin system using fixed-point theorems. The great disadvantage of the relations \eqref{ortogonalidades}  is that the  $Q_{\mathbf{n},k}$ appear simultaneously as the orthogonal polynomials and in the varying part of the measures of orthogonality with the subindices displaced. We will temporarily detach this connection introducing an appropriate mapping.

For notational convenience, throughout the rest of this paper we set $\Delta_{0}=\Delta_{3}=\emptyset$.

Given $\textbf{n}\in\mathbb{Z}_{+}^{2}(\circledast)$, let $\mathcal{P}_{{\bf n},k}$, $k=1, 2,$ denote the collection of all polynomials of degree at most $N_{\textbf{n},k}=\sum_{j=k}^{2} n_{j}$ with real coefficients and zeros in $\mathbb{C}\setminus(\Delta_{k-1}\cup\Delta_{k+1})$. Note that the zero polynomial is excluded from $\mathcal{P}_{\mathbf{n},k}$. Let
\[\mathcal{P}_{\bf n} := \mathcal{P}_{{\bf n},1} \times \mathcal{P}_{{\bf n},2}.
\]

\begin{definition} \label{Tn}
Define
 \begin{equation}\label{defmapTn}
 T_{\bf  n} :\mathcal{P}_{\bf n} \rightarrow \mathcal{P}_{\bf n}
 \end{equation}
where $T_{\bf n}(\mathbf{\widetilde{Q}}) = \mathbf{\widehat{Q}},$ $\mathbf{\widetilde{Q}}= (\widetilde{Q}_1,\widetilde{Q}_2),$ and $\mathbf{\widehat{Q}} = (\widehat{Q}_1,\widehat{Q}_2)$ is the unique vector of monic polynomials that satisfies
 \begin{equation} \label{ortogtilde}
 \int x^\nu \widehat{Q}_k(x) \frac{H_k(\mathbf{\widetilde{Q}},x)\,\rho_k(x)\,d\sigma_k(x)}{\widetilde{Q}_{k-1}(x)\,\widetilde{Q}_{k+1}(x)} = 0, \qquad \nu = 0,\ldots,N_{\mathbf{n},k}-1, \qquad k=1, 2,
 \end{equation}
 with
\begin{equation}\label{defHqtilde}
H_{k+1}(\mathbf{\widetilde{Q}},z) := \int\frac{(\widehat{Q}_{k}(x))^{2}}{z-x} \frac{H_k(\mathbf{\widetilde{Q}},x)\,\rho_k(x)\,d\sigma_k(x)}{\widetilde{Q}_{k-1}(x)\,\widetilde{Q}_{k+1}(x)}, \qquad k=1, 2, \qquad H_1(\mathbf{\widetilde{Q}},z) \equiv 1,
\end{equation}
and, by convention, $\widetilde{Q}_0\equiv\widetilde{Q}_{3} \equiv 1$. Note that $\deg \widehat{Q}_{k}=N_{\mathbf{n},k}$ for each $k=1, 2$.
\end{definition}

Observe that with $H_1(\mathbf{\widetilde{Q}},z)$ one can find $\widehat{Q}_1$ and $H_2(\mathbf{\widetilde{Q}},z)$.
Then, one can construct
$\widehat{Q}_2$ and $H_3(\mathbf{\widetilde{Q}},z)$.  For each $k=1, 2,$ the function $ {H_k(\mathbf{\widetilde{Q}},x)\,\rho_{k}(x)}/{\widetilde{Q}_{k-1}(x)\,\widetilde{Q}_{k+1}(x)}$ has constant sign on $\Delta_k$ and, therefore, $\widehat{Q}_k$ of degree $N_{\textbf{n},k}$ is uniquely determined.  Therefore, $T_{\bf n}$ is well defined.

\begin{theorem}\label{puntofijo}
 $T_{\mathbf{n}}$ has a unique fixed point in $\mathcal{P}_{\bf n}$ which coincides with $(\widetilde{Q}_{\mathbf{n},1},\widetilde{Q}_{\mathbf{n},2})$, the system of monic multiple orthogonal polynomials associated with the multi-index $\mathbf{n}$ and the Nikishin system $\mathcal{N}(\widetilde{\sigma}_{1},\widetilde{\sigma}_{2})$.
\end{theorem}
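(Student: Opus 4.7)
The plan is to verify two complementary facts: (a) the tuple $(\widetilde{Q}_{\mathbf{n},1},\widetilde{Q}_{\mathbf{n},2})$ of monic multiple orthogonal polynomials for $\mathcal{N}(\widetilde{\sigma}_1,\widetilde{\sigma}_2)$ is a fixed point of $T_{\mathbf{n}}$, and (b) any fixed point of $T_{\mathbf{n}}$ in $\mathcal{P}_{\mathbf n}$ must coincide with this tuple. Both reductions hinge on matching the data $(H_k(\widetilde{\mathbf Q},\cdot),\widehat{Q}_k)$ produced by the map $T_{\mathbf n}$ with the data $(\widetilde{\mathcal H}_{\mathbf n,k},\widetilde{Q}_{\mathbf n,k})$ canonically attached to the Nikishin system $\mathcal{N}(\widetilde{\sigma}_1,\widetilde{\sigma}_2)$, and then invoking the uniqueness statement of Theorem \ref{unicidad}.

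For (a), I would plug $\widetilde{\mathbf{Q}}=(\widetilde{Q}_{\mathbf n,1},\widetilde{Q}_{\mathbf n,2})$ into the definition of $T_{\mathbf n}$ and argue by induction on $k$ that $H_k(\widetilde{\mathbf{Q}},z)=\widetilde{\mathcal H}_{\mathbf n,k}(z)$ and $\widehat{Q}_k=\widetilde{Q}_{\mathbf n,k}$. The base case is trivial since $H_1\equiv 1\equiv\widetilde{\mathcal H}_{\mathbf n,1}$. Assuming the identification through index $k-1$, the orthogonality relations \eqref{ortogtilde} become literally \eqref{ortogonalidades} for the perturbed system (applied to $\widetilde{\sigma}_k=\rho_k\,\sigma_k$), so sign definiteness of the measure on $\Delta_k$ (inherited from $\rho_k>0$ a.e., $\widetilde{\mathcal H}_{\mathbf n,k}$ of fixed sign on $\Delta_k$, and $\widetilde Q_{\mathbf n,k\pm 1}$ having no zeros there) forces $\widehat{Q}_k=\widetilde{Q}_{\mathbf n,k}$; plugging back into the recursion \eqref{defHqtilde} then yields $H_{k+1}(\widetilde{\mathbf Q},z)=\widetilde{\mathcal H}_{\mathbf n,k+1}(z)$ via \eqref{entreHnks}.

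For (b), let $\widetilde{\mathbf{Q}}=(\widetilde{Q}_1,\widetilde{Q}_2)\in\mathcal{P}_{\mathbf n}$ be any fixed point. Since $\widetilde{Q}_{k\pm 1}$ has zeros off $\Delta_k$ (using $\Delta_1\cap\Delta_2=\emptyset$ together with the constraint in the definition of $\mathcal{P}_{\mathbf n}$), the weight in \eqref{ortogtilde} is sign-definite on $\Delta_k$ provided $H_k(\widetilde{\mathbf Q},\cdot)$ is of constant sign on $\Delta_k$; this constancy of sign is verified by induction on $k$ from the Cauchy-type recursion \eqref{defHqtilde}. Consequently $\widehat Q_k=\widetilde Q_k$ is orthogonal on $\Delta_k$ against a definite-sign weight with $N_{\mathbf n,k}$ orthogonality conditions, hence it has $N_{\mathbf n,k}$ simple zeros inside $\Delta_k$. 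This shows that $\widetilde{\mathbf Q}$ satisfies property 1) of Theorem \ref{unicidad} applied to the Nikishin system $\mathcal{N}(\widetilde{\sigma}_1,\widetilde{\sigma}_2)$, while property 2) is precisely the fixed-point equations \eqref{ortogtilde}--\eqref{defHqtilde} rewritten via $\widehat Q_k=\widetilde Q_k$. The uniqueness in Theorem \ref{unicidad} then yields $\widetilde Q_k=\widetilde{Q}_{\mathbf n,k}$.

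The only real subtlety I anticipate is the bookkeeping needed to identify the functions $H_k(\widetilde{\mathbf Q},\cdot)$ produced by $T_{\mathbf n}$ with the functions $H_{\mathbf n,k}$ appearing in Theorem \ref{unicidad}, and to check carefully that the sign-definiteness of the measures in \eqref{ortogtilde} propagates correctly through the induction so that $\widehat{Q}_k$ indeed has its zeros inside $\Delta_k$ (and is thus a legitimate candidate in the sense of Theorem \ref{unicidad}). Once that matching is performed transparently, both the existence and uniqueness parts reduce immediately to the existence and uniqueness asserted in Theorem \ref{unicidad}.
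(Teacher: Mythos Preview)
Your proposal is correct and follows essentially the same approach as the paper: both reduce the claim to Theorem~\ref{unicidad} applied to the perturbed system $\mathcal{N}(\widetilde{\sigma}_1,\widetilde{\sigma}_2)$, by observing that the fixed-point equations \eqref{ortogtilde}--\eqref{defHqtilde} with $\widehat Q_k=\widetilde Q_k$ become exactly the characterizing relations \eqref{ortogonalidades:new}--\eqref{entreHnks:new}. The paper's version is simply terser---it leaves the existence part (your (a)) and the verification of property 1) (zeros in $\Delta_k$) implicit, while you spell out both via the inductive matching of $H_k(\widetilde{\mathbf Q},\cdot)$ with $\widetilde{\mathcal H}_{\mathbf n,k}$ and the sign-definiteness argument.
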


\begin{proof} Assume that $T_{\mathbf{n}}(\mathbf{\widetilde{Q}})=\mathbf{ \widetilde{Q}}$. From the definition of the operator $T_{\mathbf{n}}$, the $k$-th component of the vector $\mathbf{\widetilde{Q}}$ must be a monic polynomial of degree $N_{\mathbf{n},k}$, for each $k=1, 2$. The conditions \eqref{ortogtilde}--\eqref{defHqtilde} reduce to \eqref{ortogonalidades}--\eqref{entreHnks} for the multi-index $\bf n$
with respect to $\mathcal{N}(\widetilde{\sigma}_1,\widetilde{\sigma}_2)$.
Thus, by Theorem~ \ref{unicidad} applied to $\mathcal{N}(\widetilde{\sigma}_1,\widetilde{\sigma}_2)$, we obtain $\mathbf{\widetilde{Q}} = (\widetilde{Q}_{{\bf n},1},\widetilde{Q}_{{\bf n},2})$.
\end{proof}

In analogy with Definition \ref{defKkappa}, we introduce a new normalization appropriate with the use of the operator $T_{\bf n}$.

\begin{definition} \label{normaTn}
Let $\mathbf{\widetilde{Q}}=(\widetilde{Q}_1,\widetilde{Q}_2)\in\mathcal{P}_{\bf n}$, ${\bf n}\in \Z_+^2(\circledast)$, and $T_{\bf n}(\mathbf{\widetilde{Q}})= \mathbf{\widehat{Q}}=(\widehat{Q}_1,\widehat{Q}_2)$. Set
\begin{equation}\label{defKQs}
K_{0}(\mathbf{\widetilde{Q}}):=1, \qquad K_{k}(\mathbf{\widetilde{Q}}):=\left|\int (\widehat{Q}_{k}(x))^2\,\frac{H_k(\mathbf{\widetilde{Q}},x)\,\rho_k(x)\,d\sigma_k(x)}
{\widetilde{Q}_{k-1}(x)\,\widetilde{Q}_{k+1}(x)}\right|^{-1/2}, \qquad k=1, 2,
\end{equation}
where the functions $H_k(\mathbf{\widetilde{Q}},x)$ are defined in \eqref{defHqtilde}, and $\widetilde{Q}_{0}\equiv \widetilde{Q}_{3}\equiv 1$. Take
\begin{equation} \label{def:kappas2} \kappa_{k}(\mathbf{\widetilde{Q}}):=\frac{K_{k}(\mathbf{\widetilde{Q}})}{K_{k-1}(\mathbf{\widetilde{Q}})},\qquad k=1, 2.
\end{equation}
Define
\begin{align*}
\widehat{q}_k & := \kappa_{k}(\mathbf{\widetilde{Q}})\,\widehat{Q}_k, \qquad k=1, 2,\\
h_{k}(\mathbf{\widetilde{Q}},z) & := K_{k-1}^2(\mathbf{\widetilde{Q}})\,H_{k}(\mathbf{\widetilde{Q}},z), \qquad 1\leq k \leq 3.
\end{align*}
\end{definition}

Observe that $h_{1}(\mathbf{\widetilde{Q}},z)\equiv 1$. With this notation, $\widehat{q}_k$ is the orthonormal polynomial of degree $N_{\mathbf{n},k}$ with respect to the measure
\[ { |h_k(\mathbf{\widetilde{Q}},x)|\, \rho_k(x)\,d\sigma_k(x)}/{|\widetilde{Q}_{k-1}(x)\,\widetilde{Q}_{k+1}(x)|}\]
and $\widehat{Q}_k$ is the corresponding monic orthogonal polynomial.

\begin{lemma}\label{limitehk2} Let $(\widetilde{\mathbf{L}}_{\bf n})_{{\bf n} \in \Lambda}$, $\widetilde{\mathbf{L}}_{\bf n}=(\widetilde{L}_{\mathbf{n},1},\widetilde{L}_{\mathbf{n},2})\in \mathcal{P}_{\bf n}$, where $\Lambda \subset \Z_+^2(\circledast)$ is an infinite sequence of distinct multi-indices such that there exists a constant $c\geq 0$ for which $n_1\leq n_{2} + c$, for all ${\bf n}=(n_{1},n_{2}) \in \Lambda$. We also assume that for each $k=1, 2$, the zeros of $(\widetilde{L}_{\mathbf{n},k})_{\mathbf{n}\in\Lambda}$ are uniformly bounded away from $\Delta_{k-1}\cup\Delta_{k+1}$. Let $\widetilde{\varepsilon}_{\mathbf{n},k}$ denote the sign of the measure
\[
h_k(\mathbf{\widetilde{L}}_{\bf n},x)\,\rho_{k}(x)\,d\sigma_{k}(x)/\widetilde{L}_{\mathbf{n},k-1}(x)\,\widetilde{L}_{\mathbf{n},k+1}(x)
\]
on $\Delta_{k}$. Then
\begin{equation}\label{convhkfunc}
\lim_{{\bf n} \in \Lambda} \widetilde{\varepsilon}_{\mathbf{n},k-1}\,h_k(\mathbf{\widetilde{L}}_{\bf n},z)= \frac{1}{\sqrt{(z-a_{k-1})(z-b_{k-1})}}, \qquad k=2, 3,
\end{equation}
uniformly on compact subsets of $\overline{\C} \setminus \Delta_{k-1}$.
\end{lemma}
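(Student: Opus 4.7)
The plan is to rewrite $\widetilde\varepsilon_{\mathbf n,k-1}\,h_k(\widetilde{\mathbf L}_{\mathbf n},z)$ as the Cauchy transform of a probability measure on $\Delta_{k-1}$ and to show that this probability measure converges in the weak-star sense to the equilibrium measure $\omega_{\Delta_{k-1}}$, whose Cauchy transform equals $1/\sqrt{(z-a_{k-1})(z-b_{k-1})}$. The argument proceeds by induction on $k\in\{2,3\}$.

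Using Definition~\ref{normaTn}, \eqref{defHqtilde}, and the identity $K_{k-1}^{2}\,\widehat Q_{k-1}^{\,2}=K_{k-2}^{2}\,\widehat q_{k-1}^{\,2}$ (with $K_0\equiv 1$), one derives
\begin{equation*}
h_k(\widetilde{\mathbf L}_{\mathbf n},z)=\int_{\Delta_{k-1}}\frac{\widehat q_{k-1}^{\,2}(x)}{z-x}\,\frac{h_{k-1}(\widetilde{\mathbf L}_{\mathbf n},x)\,\rho_{k-1}(x)\,d\sigma_{k-1}(x)}{\widetilde L_{\mathbf n,k-2}(x)\,\widetilde L_{\mathbf n,k}(x)},\qquad k=2,3.
\end{equation*}
Multiplying by $\widetilde\varepsilon_{\mathbf n,k-1}$ produces a positive measure $d\nu_{\mathbf n,k-1}$ on $\Delta_{k-1}$, and by the definitions of $K_{k-1}$ and $\widehat q_{k-1}$ the polynomial $\widehat q_{k-1}$ is the $N_{\mathbf n,k-1}$-th orthonormal polynomial with respect to $d\nu_{\mathbf n,k-1}$, so that $\widehat q_{k-1}^{\,2}\,d\nu_{\mathbf n,k-1}$ is a probability measure. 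The required limit is therefore the $*$-weak convergence of $(\widehat q_{k-1}^{\,2}\,d\nu_{\mathbf n,k-1})_{\mathbf n\in\Lambda}$ to $\omega_{\Delta_{k-1}}$, which by normal families yields uniform convergence of the Cauchy transforms on compact subsets of $\overline{\C}\setminus\Delta_{k-1}$.

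For the base case $k=2$, one has $h_1\equiv 1$ and $d\nu_{\mathbf n,1}=\rho_1\,d\sigma_1/|\widetilde L_{\mathbf n,2}|$. Under the stated hypotheses ($\sigma_1'>0$ a.e., $|q|\rho_1^{\pm 1}\in L^\infty(\sigma_1)$, and $|\widetilde L_{\mathbf n,2}|$ uniformly bounded above and below on $\Delta_1$ by positive constants since the zeros are separated from $\Delta_1$), the densities fit the hypotheses $i)$--$iv)$ of Lemma~\ref{relescalar}. Combining its ratio asymptotics with the elementary identity
\begin{equation*}
\int\frac{p_N^{\,2}(x)}{z-x}\,d\nu(x)=p_N(z)\int\frac{p_N(x)}{z-x}\,d\nu(x),
\end{equation*}
which is a consequence of the orthogonality of $p_N$ against polynomials of degree less than $N$, one compares $\widehat q_1$ and its second-kind Cauchy transform with the corresponding Chebyshev objects on $\Delta_1$; the Szeg\H{o}-function factors produced by Lemma~\ref{relescalar} for the two factors reconstruct the Chebyshev Markov function $1/\sqrt{(z-a_1)(z-b_1)}$ in the limit.

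For $k=3$, the base case gives uniform convergence on $\Delta_2\subset\overline{\C}\setminus\Delta_1$ of $\widetilde\varepsilon_{\mathbf n,1}h_2(\widetilde{\mathbf L}_{\mathbf n},\cdot)$ to the positive continuous function $1/\sqrt{(x-a_1)(x-b_1)}$, so $|h_2|$ is eventually uniformly bounded between positive constants on $\Delta_2$ and converges uniformly to a positive continuous limit. The density of $d\nu_{\mathbf n,2}=|h_2|\,\rho_2\,d\sigma_2/|\widetilde L_{\mathbf n,1}|$ therefore satisfies the hypotheses of Lemma~\ref{relescalar} with the continuous positive factor $|h_2|$ absorbed into the $g_n,\widetilde g_n$ of its statement, and the argument of the base case applies verbatim with $\Delta_1$ replaced by $\Delta_2$. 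The main technical obstacle is passing from the ratio asymptotics of two orthonormal polynomial sequences (Lemma~\ref{relescalar}) to a Cauchy-transform/weak-star asymptotic for $\widehat q_{k-1}^{\,2}\,d\nu_{\mathbf n,k-1}$ via the factorization above; the uniform separation hypothesis on the zeros of $\widetilde L_{\mathbf n,k}$ from $\Delta_{k-1}\cup\Delta_{k+1}$ is essential to keep the densities uniformly controlled so that the reference Chebyshev measure can serve as a fixed comparison point in both the $k=2$ and $k=3$ steps.
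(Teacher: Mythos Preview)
Your overall strategy---write $\widetilde\varepsilon_{\mathbf n,k-1}h_k(\widetilde{\mathbf L}_{\mathbf n},z)$ as the Cauchy transform of the probability measure $\widehat q_{k-1}^{\,2}\,d\nu_{\mathbf n,k-1}$ and show weak-$*$ convergence to the equilibrium measure of $\Delta_{k-1}$---is exactly the paper's strategy. The gap is in how you justify that weak-$*$ convergence.

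First, your claim that ``$|\widetilde L_{\mathbf n,2}|$ is uniformly bounded above and below on $\Delta_1$ by positive constants since the zeros are separated from $\Delta_1$'' is false: $\deg\widetilde L_{\mathbf n,2}=n_2\to\infty$, so on $\Delta_1$ one only has bounds of the type $\delta^{\,n_2}\le|\widetilde L_{\mathbf n,2}|\le M^{n_2}$, which are not uniform. The denominator $|\widetilde L_{\mathbf n,2}|$ is a genuinely \emph{varying} factor and cannot be absorbed into the continuous weights $g_n,\widetilde g_n$ of Lemma~\ref{relescalar}.

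Second, even granting that, Lemma~\ref{relescalar} is the wrong tool. It compares two orthonormal families that share the \emph{same} varying denominator $|w_{2n}|$; it does not let you compare $\widehat q_{k-1}$ against Chebyshev polynomials (which have no such denominator), and it says nothing about the asymptotics of the second-kind functions $\int p_N(x)(z-x)^{-1}d\nu(x)$ needed in your factorization. So the step ``the Szeg\H{o}-function factors produced by Lemma~\ref{relescalar} for the two factors reconstruct the Chebyshev Markov function'' is not actually supported.

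The paper instead invokes a weak-convergence theorem for orthonormal polynomials with respect to varying measures (Theorem~8 of \cite{BerLop}): if $\sigma_{k-1}'>0$ a.e., $\deg w_{2n}\le 2N+O(1)$, and the zeros of $w_{2n}$ stay away from $\Delta_{k-1}$, then $\widehat q_{k-1}^{\,2}\,d\nu_{\mathbf n,k-1}\xrightarrow{*}\omega_{\Delta_{k-1}}$ directly. This yields the $k=2$ case, and then the uniform convergence of $|h_2|$ on $\Delta_2$ (now a legitimate bounded continuous factor) feeds into the same theorem for $k=3$. That external weak-convergence result, not Lemma~\ref{relescalar}, is the missing ingredient in your argument.
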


\begin{proof} When $k=2$, we have
\begin{equation}\label{intreph2}
h_{2}(\mathbf{\widetilde{L}}_{\bf n},z)= \int   \frac{(\widehat{l}_{{\bf n},1}(x))^2}{z-x} \frac{\rho_1(x)\,d\sigma_1(x)}{ \widetilde{L}_{{\bf n},2}(x)},
 \end{equation}
 where $\widehat{l}_{{\bf n},1}$ is the orthonormal polynomial of degree $|\bf n|$ with respect to the varying measure ${\rho_1\, d\sigma_1}/{ |\widetilde{L}_{{\bf n},2}|}$. Since the zeros of $\widetilde{L}_{{\bf n},2}$ are uniformly bounded away from $\Delta_{1}$ and $\deg \widetilde{L}_{{\bf n},2} \leq 2\deg \widehat{l}_{{\bf n},1}$, applying Theorem 8 in \cite{BerLop} we obtain
 \begin{equation}\label{weakconv:1}
 \lim_{\mathbf{n}\in\Lambda}\int f(x)\,(\widehat{l}_{{\bf n},1}(x))^2\,\frac{\rho_{1}(x)\,d\sigma_{1}(x)}{|\widetilde{L}_{{\bf n},2}(x)|}=
 \frac{1}{\pi}\int_{\Delta_{1}}f(x)\,\frac{dx}{\sqrt{(b_1-x)(x-a_1)}}
 \end{equation}
 for any function $f$ continuous on $\Delta_1$. Taking $f(x)=(z-x)^{-1}$ in \eqref{weakconv:1} and using \eqref{intreph2} we obtain
 \begin{equation}\label{asymph2}
 \lim_{\mathbf{n}\in\Lambda}\widetilde{\varepsilon}_{\mathbf{n},1}\,h_{2}(\mathbf{\widetilde{L}}_{\bf n},z)=
 \frac{1}{\pi}\int_{\Delta_{1}}\frac{1}{z-x}\,\frac{dx}{\sqrt{(b_1-x)(x-a_1)}}=\frac{1}{\sqrt{(z-a_1)(z-b_1)}}
 \end{equation}
for any $z\in\overline{\mathbb{C}}\setminus\Delta_1$. The convergence is in fact uniform on compact subsets of $\overline{\mathbb{C}}\setminus\Delta_1$ since the integral functions are uniformly bounded on compact subsets of the indicated region.

We have
\[
h_{3}(\mathbf{\widetilde{L}}_{\bf n},z)=\int\frac{(\widehat{l}_{{\bf n},2}(x))^2}{z-x} \frac{h_{2}(\mathbf{\widetilde{L}}_{\bf n},x)}{ \widetilde{L}_{{\bf n},1}(x)}\,\rho_2(x)\,d\sigma_2(x)
\]
where $\widehat{l}_{{\bf n},2}$ is the orthonormal polynomial of degree $N_{\mathbf{n},2}$ with respect to the positive measure
\[
\frac{|h_{2}(\mathbf{\widetilde{L}}_{\bf n},x)|}{|\widetilde{L}_{{\bf n},1}(x)|}\rho_2(x)\,d\sigma_2(x).
\]
By \eqref{asymph2} we have
\[
\lim_{{\bf n} \in \Lambda} |h_2(\mathbf{\widetilde{L}}_{\bf n},x)| = \frac{1}{|\sqrt{(x-a_{1})(x-b_{1})}|}
\]
uniformly on $\Delta_{2}$, the zeros of $\widetilde{L}_{{\bf n},1}(x)$ are uniformly bounded away from $\Delta_{2}$, and
\[
\deg \widetilde{L}_{{\bf n},1}\leq N_{\mathbf{n},1}\leq 2N_{\mathbf{n},2}+c\leq 2\,(\deg(\widehat{l}_{{\bf n},2})+c).
\]
So Theorem 8 in \cite{BerLop} can be applied in this context if we identify in the notation of that theorem the measure $\mu_{n}$ with $|h_{2}(\mathbf{\widetilde{L}}_{\bf n},x)|\,\rho_2(x)\,d\sigma_2(x)$, the function $w_{2n}$ with $|\widetilde{L}_{{\bf n},1}|$, and $l_{n,n+k}$ with $\widehat{l}_{{\bf n},2}$. Applying that theorem we obtain
\[
\lim_{\mathbf{n}\in\Lambda}\int f(x)(\widehat{l}_{{\bf n},2}(x))^2\frac{|h_{2}(\mathbf{\widetilde{L}}_{\bf n},x)|}{|\widetilde{L}_{{\bf n},1}(x)|}\rho_2(x)\,d\sigma_2(x)=\frac{1}{\pi}\int_{\Delta_{2}}f(x)\,\frac{dx}{\sqrt{(b_2-x)(x-a_2)}}
\]
for any continuous function $f$ on $\Delta_2$. Arguing as before, this implies
\[
\lim_{{\bf n} \in \Lambda} \widetilde{\varepsilon}_{\mathbf{n},2}\,h_{3}(\mathbf{\widetilde{L}}_{\bf n},z)=\frac{1}{\sqrt{(z-a_{2})(z-b_{2})}}
\]
uniformly on compact subsets of $\overline{\mathbb{C}}\setminus\Delta_{2}$.
\end{proof}

\begin{lemma}\label{relvectorial} Assume that conditions $c1)$ and $c2)$ hold for each $k=1, 2$. Let $(\widetilde{\mathbf{L}}_{\bf n})_{{\bf n} \in \Lambda}$, $\widetilde{\mathbf{L}}_{\bf n}=(\widetilde{L}_{\mathbf{n},1},\widetilde{L}_{\mathbf{n},2})\in \mathcal{P}_{\bf n}$, where $\Lambda \subset \Z_+^2(\circledast)$ is an infinite sequence of distinct multi-indices such that there exists a constant $c\geq 0$ for which $n_1 \leq n_{2} + c$, for all ${\bf n}=(n_{1},n_{2}) \in \Lambda$. Assume that for each $k$, the zeros of $(\widetilde{L}_{\mathbf{n},k})_{\mathbf{n}\in\Lambda}$ are uniformly bounded away from $\Delta_{k-1}\cup\Delta_{k+1}$. Suppose that
\begin{equation} \label{fk} \lim_{{\bf n} \in \Lambda} \frac{\widetilde{L}_{\mathbf{n},k}}{Q_{\mathbf{n},k}}=f_k, \qquad k=1, 2,
\end{equation}
uniformly on $\Delta_{k-1} \cup \Delta_{k+1}$, where $(Q_{\mathbf{n},1},Q_{\mathbf{n},2})$ is the system of monic multiple orthogonal polynomials associated with the Nikishin system $\mathcal{N}(\sigma_1,\sigma_2)$ and ${\bf n} \in \Lambda$. We assume that $f_{k}>0$ on $\Delta_{k-1} \cup \Delta_{k+1}$. Finally, let $T_{\bf n}(\widetilde{\mathbf{L}}_{\bf n}) = \widehat{\mathbf{L}}_{\bf n} = (\widehat{ {L}}_{{\bf n},1},\widehat{ {L}}_{{\bf n},2})$ and $\widehat{ {l}}_{{\bf n},k} = \kappa_{k}(\mathbf{\widetilde{L}}_{\bf n})\widehat{ {L}}_{{\bf n},k}$. Then, for each $k=1, 2$,
\begin{equation} \label{limfund3}
\lim_{{\bf n} \in \Lambda} \frac{\widehat{l}_{{\bf n},k}(z)}{ { {q}}_{{\bf n},k}(z)}=\mathsf{G}_{\Delta_{k}}(\rho_k/f_{k-1}f_{k+1};z), \qquad
\lim_{{\bf n} \in \Lambda} \frac{\kappa_{k}(\mathbf{\widetilde{L}}_{\bf n})}{\kappa_{{\bf n},k}}=\mathsf{G}_{\Delta_{k}}(\rho_k/f_{k-1}f_{k+1};\infty),
\end{equation}
uniformly on compact subsets of $\overline{\C} \setminus \Delta_k$ ($f_0 \equiv f_{3} \equiv 1$).
\end{lemma}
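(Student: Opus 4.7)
The plan is to realize both $q_{\mathbf{n},k}$ and $\widehat{l}_{\mathbf{n},k}$ as orthonormal polynomials of the common degree $N_{\mathbf{n},k}$ with respect to two positive varying measures on $\Delta_k$ that differ by a factor of the form $\rho_k\, G_{\mathbf{n},k}$, and then to invoke the scalar relative asymptotics of Lemma \ref{relescalar}. From Definitions \ref{defKkappa} and \ref{normaTn}, the polynomial $q_{\mathbf{n},k}$ is orthonormal with respect to
\[
d\tau_{\mathbf{n},k}(x) := \frac{|h_{\mathbf{n},k}(x)|\, d\sigma_k(x)}{|Q_{\mathbf{n},k-1}(x)\, Q_{\mathbf{n},k+1}(x)|},
\]
and $\widehat{l}_{\mathbf{n},k}$ with respect to
\[
d\widetilde{\tau}_{\mathbf{n},k}(x) := \frac{|h_k(\widetilde{\mathbf{L}}_{\mathbf{n}},x)|\, \rho_k(x)\, d\sigma_k(x)}{|\widetilde{L}_{\mathbf{n},k-1}(x)\, \widetilde{L}_{\mathbf{n},k+1}(x)|},
\]
so that $d\widetilde{\tau}_{\mathbf{n},k} = \rho_k\, G_{\mathbf{n},k}\, d\tau_{\mathbf{n},k}$ with
\[
G_{\mathbf{n},k}(x) = \frac{|h_k(\widetilde{\mathbf{L}}_{\mathbf{n}},x)|}{|h_{\mathbf{n},k}(x)|} \cdot \left|\frac{Q_{\mathbf{n},k-1}(x)\, Q_{\mathbf{n},k+1}(x)}{\widetilde{L}_{\mathbf{n},k-1}(x)\, \widetilde{L}_{\mathbf{n},k+1}(x)}\right|.
\]

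Next I would show that $G_{\mathbf{n},k} \to 1/(f_{k-1}\, f_{k+1})$ uniformly on $\Delta_k$, the limit being positive and continuous there (with $f_0 \equiv f_3 \equiv 1$). By assumption \eqref{fk} (and trivially when the subindex equals $0$ or $3$), $\widetilde{L}_{\mathbf{n},k \pm 1}/Q_{\mathbf{n},k \pm 1} \to f_{k \pm 1}$ uniformly on $\Delta_k$; combined with the standing positivity of $f_{k \pm 1}$ on $\Delta_k$, the second factor of $G_{\mathbf{n},k}$ tends uniformly on $\Delta_k$ to $1/(f_{k-1}(x)\, f_{k+1}(x))$. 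For the first factor, when $k = 1$ both numerator and denominator equal $1$, while for $k = 2$ Lemma \ref{limitehk} and Lemma \ref{limitehk2} (both applied with index $2$) give
\[
\lim_{\mathbf{n} \in \Lambda} \varepsilon_{\mathbf{n},1}\, h_{\mathbf{n},2}(z) = \lim_{\mathbf{n} \in \Lambda} \widetilde{\varepsilon}_{\mathbf{n},1}\, h_2(\widetilde{\mathbf{L}}_{\mathbf{n}},z) = \frac{1}{\sqrt{(z - a_1)(z - b_1)}}
\]
uniformly on compact subsets of $\overline{\mathbb{C}} \setminus \Delta_1$, in particular on $\Delta_2$, so the ratio of the absolute values tends uniformly to $1$ on $\Delta_2$.

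The final step is to apply Lemma \ref{relescalar} to the pair of varying measures above, taking $\mu = \sigma_k$, $\Delta = \Delta_k$, $w_{2n} = Q_{\mathbf{n},k-1}\, Q_{\mathbf{n},k+1}$ (whose zeros lie on $\Delta_{k-1} \cup \Delta_{k+1}$ and are therefore uniformly bounded away from $\Delta_k$), $g_{\mathbf{n}} = |h_{\mathbf{n},k}|$, $\widetilde{g}_{\mathbf{n}} = G_{\mathbf{n},k}$, and $\rho = \rho_k$. Condition $c2)$ supplies the integrability and $L^\infty$ bounds on $\rho_k$, and the previous paragraph supplies uniform convergence of $g_{\mathbf{n}}$ and $\widetilde{g}_{\mathbf{n}}$ on $\Delta_k$ to positive continuous limits. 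The lemma then yields
\[
\lim_{\mathbf{n} \in \Lambda} \frac{\widehat{l}_{\mathbf{n},k}(z)}{q_{\mathbf{n},k}(z)} = \mathsf{G}_{\Delta_k}\!\bigl(\rho_k/(f_{k-1}\, f_{k+1});\, z\bigr)
\]
uniformly on compact subsets of $\overline{\mathbb{C}} \setminus \Delta_k$, which is the first assertion in \eqref{limfund3}. Letting $z \to \infty$ and using that $\widehat{l}_{\mathbf{n},k}$ and $q_{\mathbf{n},k}$ have the same degree $N_{\mathbf{n},k}$ with leading coefficients $\kappa_k(\widetilde{\mathbf{L}}_{\mathbf{n}})$ and $\kappa_{\mathbf{n},k}$ respectively yields the companion limit for the ratio of these leading coefficients.

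The step I expect to be most delicate is the uniform convergence of $G_{\mathbf{n},k}$ to a positive continuous function on $\Delta_k$, since it consolidates all three earlier ingredients (the hypothesis \eqref{fk} on the ratios of polynomials and the asymptotics of $h_{\mathbf{n},k}$ and $h_k(\widetilde{\mathbf{L}}_{\mathbf{n}}, \cdot)$ supplied by Lemmas \ref{limitehk} and \ref{limitehk2}). A minor secondary point is that when $c > 0$, the degree of $w_{2n} = Q_{\mathbf{n},1}\, Q_{\mathbf{n},3}$ at $k = 2$ may exceed $2 N_{\mathbf{n},2}$ by at most $c$, but this bounded discrepancy is precisely what the fixed-shift parameter $r$ in Lemma \ref{relescalar} absorbs, as is already used in the proof of Lemma \ref{limitehk2}.
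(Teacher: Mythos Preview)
Your proof is correct and follows essentially the same route as the paper: rewrite $d\widetilde\tau_{\mathbf n,k}$ as $\rho_k\,G_{\mathbf n,k}\,d\tau_{\mathbf n,k}$, use hypothesis \eqref{fk} together with Lemmas \ref{limitehk} and \ref{limitehk2} to get $G_{\mathbf n,k}\to 1/(f_{k-1}f_{k+1})$ uniformly on $\Delta_k$, and then invoke Lemma \ref{relescalar}; the second limit follows by evaluating at $\infty$. Your treatment is in fact slightly more explicit than the paper's, which simply says ``Using Lemma \ref{relescalar}, the first limit readily follows'' without spelling out the degree bookkeeping for $w_{2n}$ when $k=2$; your remark that the bounded excess $n_1+n_2-2n_2\le c$ is absorbed by passing to finitely many subsequences with fixed shift (so that the parameter $r$ in Lemma \ref{relescalar} is constant along each) is the right way to close that small gap.
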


\begin{proof} Fix $k=1, 2$. By definition we know that $\widehat{l}_{{\bf n},k}$ is the orthonormal polynomial of degree $N_{{\bf{n}},k}$ with respect to the varying measure
\[
\frac{|h_k(\mathbf{\widetilde{L}}_{\bf n},x)|\,\rho_k(x)\,d\sigma_k(x)}{|\widetilde{L}_{{\bf n},k-1}(x)\,\widetilde{L}_{{\bf n},k+1}(x)|} =\frac{| {Q}_{{\bf n},k-1}(x)\, {Q}_{{\bf n},k+1}(x)|}{|\widetilde{L}_{{\bf n},k-1}(x)\,\widetilde{L}_{{\bf n},k+1}(x)|} \frac{|h_k(\mathbf{\widetilde{L}}_{\bf n},x)|\,\rho_k(x)}{|h_{{\bf n},k}(x)|}
 \frac{|h_{{\bf n},k}(x)|\,d\sigma_k(x)}{ | {Q}_{{\bf n},k-1}(x)\,{Q}_{{\bf n},k+1}(x)|}.
\]
Using \eqref{fk}, Lemma \ref{limitehk}, and Lemma \ref{limitehk2}, we obtain
\[
\lim_{{\bf n}\in \Lambda} \frac{|{Q}_{{\bf n},k-1}(x) {Q}_{{\bf n},k+1}(x)|}{|\widetilde{L}_{{\bf n},k-1}(x)\widetilde{L}_{{\bf n},k+1}(x)|} \frac{|h_k(\mathbf{\widetilde{L}}_{\bf n},x)| }{|h_{{\bf n},k}(x)|} = \frac{1 }{f_{k-1}(x) f_{k+1}(x)}
\]
uniformly on $\Delta_k$. The polynomial $q_{{\bf n},k}$ is the orthonormal polynomial of degree $N_{{\bf{n}},k}$ with respect to the varying measure
\[
\frac{|h_{{\bf n},k}(x)|\,d\sigma_k(x)}{ | {Q}_{{\bf n},k-1}(x)\,{Q}_{{\bf n},k+1}(x)|}.
\]
 Using Lemma \ref{relescalar}, the first limit readily follows. The second limit is a consequence of the first one applied at $\infty$.
\end{proof}

\section{Proof of Theorem~\ref{theomain}}\label{section:proof}

Before we proceed to the proof of Theorem~\ref{theomain}, we establish some preliminary facts and introduce some important definitions and notations.

Let $\Lambda\subset\mathbb{Z}_{+}^{2}(\circledast)$ be an infinite sequence of distinct multi-indices $\mathbf{n}=(n_{1},n_{2})$ such that $\sup_{\mathbf{n}\in\Lambda}(n_{1}-n_{2})<\infty$, that is, there exists $c>0$ such that $n_{1}\leq n_{2}+c$.

Let $\Phi=(\Phi_{1},\Phi_{2})\in\mathbf{C}_{\Delta}^{+}$ be the fixed point of the mapping $T$. This means
\[
\Phi_{k}(x)=\mathsf{G}_{\Delta_{k}}(\rho_{k}/\Phi_{k-1}\Phi_{k+1};x),\qquad x\in\Delta_{k-1}\cup\Delta_{k+1},\quad k=1, 2,
\]
($\Phi_{0}\equiv \Phi_{3}\equiv 1$). The analytic extension of $\Phi_{k}$ to $\overline{\mathbb{C}}\setminus\Delta_{k}$ will be denoted again by $\Phi_{k}$. For $\mathbf{n}\in\Lambda$ we define the space
\[
S_{\mathbf{n}}^{+}=\left\{\left(\frac{ P_{\mathbf{n},1}}{Q_{\mathbf{n},1}},\frac{ P_{\mathbf{n},2}}{Q_{\mathbf{n},2}}\right): P_{\mathbf{n},k}\in\mathcal{P}_{\mathbf{n},k},\,\,\frac{P_{\mathbf{n},k}}{Q_{\mathbf{n},k}}>0\,\,\mbox{on}\,\,\Delta_{k-1}\cup\Delta_{k+1}\right\}.
\]
It is clear that $S_{\mathbf{n}}^{+}\subset \mathbf{C}_{\Delta}^{+}$. On $S_{\mathbf{n}}^{+}$ we define another operator.

\begin{definition} \label{operTn}
Let
\[ \widetilde{T}_{\mathbf{n}}:S_{\mathbf{n}}^{+}\rightarrow S_{\mathbf{n}}^{+}\]
be the operator defined by
\begin{equation}\label{defTntilde}
\widetilde{T}_{\mathbf{n}}\left( \frac{P_{\mathbf{n},1}}{ Q_{\mathbf{n},1}},\frac{ P_{\mathbf{n},2}}{ Q_{\mathbf{n},2}} \right)=\left(\frac{\kappa_1(\mathbf{P}_{\bf n})\,\widehat{P}_{\mathbf{n},1}}{\kappa_{\mathbf{n},1}\,Q_{\mathbf{n},1}},\frac{ \kappa_2(\mathbf{P}_{\bf n})\,\widehat{P}_{\mathbf{n},2}}{\kappa_{\mathbf{n},2}\,Q_{\mathbf{n},2}}\right),
\end{equation}
where $(\widehat{P}_{\mathbf{n},1},\widehat{P}_{\mathbf{n},2})=T_{\mathbf{n}}(\mathbf{P}_{\bf n})$ and
$\mathbf{P}_{\bf n} = (P_{\mathbf{n},1},P_{\mathbf{n},2})$.  The constants $\kappa_{{\bf n},k}$  are defined in \eqref{def:kappas} and the $\kappa_k(\mathbf{P}_n)$ in \eqref{def:kappas2}.
\end{definition}

\begin{theorem}\label{fixedpointTntilde}
The operator $\widetilde{T}_{\mathbf{n}}:S_{\mathbf{n}}^{+}\rightarrow S_{\mathbf{n}}^{+}$ has a unique fixed point in $S_{\mathbf{n}}^{+}$, which is given by the vector
\begin{equation}\label{vectorfixedpoint}
\left(c_{\mathbf{n},1}\frac{\widetilde{Q}_{\mathbf{n},1}}{Q_{\mathbf{n},1}},c_{\mathbf{n},2}\frac{\widetilde{Q}_{\mathbf{n},2}}{Q_{\mathbf{n},2}}\right),
\end{equation}
where
$
c_{\mathbf{n},1}=\left(\widetilde{\kappa}_{\mathbf{n},1}/\kappa_{\mathbf{n},1}\right)^{4/3}\left(\widetilde{\kappa}_{\mathbf{n},2}/\kappa_{\mathbf{n},2}\right)^{2/3}$ and  $c_{\mathbf{n},2}=\left(\widetilde{\kappa}_{\mathbf{n},1}/\kappa_{\mathbf{n},1}\right)^{2/3}\left(\widetilde{\kappa}_{\mathbf{n},2}/\kappa_{\mathbf{n},2}\right)^{4/3}
$.
\end{theorem}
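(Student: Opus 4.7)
The plan is to combine two key properties: (i) a coordinate-wise positive-scale invariance of $T_{\mathbf{n}}$, and (ii) an explicit scaling law for the normalization constants $\kappa_k(\cdot)$ from Definition~\ref{normaTn}. Granted these, the existence and uniqueness of the fixed point will reduce cleanly to Theorem~\ref{puntofijo}.

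First I will check that $T_{\mathbf{n}}(\lambda_1\widetilde{Q}_1,\lambda_2\widetilde{Q}_2)=T_{\mathbf{n}}(\widetilde{Q}_1,\widetilde{Q}_2)$ for any $\lambda_1,\lambda_2>0$. Inspection of \eqref{ortogtilde}--\eqref{defHqtilde} shows that $\widehat{Q}_1$ is the monic polynomial orthogonal to $\rho_1\,d\sigma_1/\widetilde{Q}_2$, which is insensitive to positive scaling of the measure, so $\widehat{Q}_1$ is unchanged; correspondingly $H_2$ gets divided by $\lambda_2$, but this positive factor is again absorbed in passing to the monic $\widehat{Q}_2$. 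Tracking the same rescaling through \eqref{defKQs}--\eqref{def:kappas2} gives
\begin{equation*}
\kappa_1(\lambda_1\widetilde{Q}_1,\lambda_2\widetilde{Q}_2)=\lambda_2^{1/2}\,\kappa_1(\widetilde{Q}_1,\widetilde{Q}_2),\qquad
\kappa_2(\lambda_1\widetilde{Q}_1,\lambda_2\widetilde{Q}_2)=\lambda_1^{1/2}\,\kappa_2(\widetilde{Q}_1,\widetilde{Q}_2),
\end{equation*}
the half-powers coming from the $1/\widetilde{Q}_2$ factor inside $K_1$ and the combined $H_2/\widetilde{Q}_1$ inside $K_2$. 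This bookkeeping step is the most delicate technical point of the proof, since one must track how the nested $H_k$ transform under rescaling and confirm that all factors assemble into the stated exponents.

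Now let $(P_{\mathbf{n},1}/Q_{\mathbf{n},1},\,P_{\mathbf{n},2}/Q_{\mathbf{n},2})\in S_{\mathbf{n}}^+$ be an arbitrary fixed point of $\widetilde{T}_{\mathbf{n}}$. Formula \eqref{defTntilde} forces $P_{\mathbf{n},k}=a_k\widehat{P}_{\mathbf{n},k}$, where $a_k:=\kappa_k(\mathbf{P}_{\mathbf{n}})/\kappa_{\mathbf{n},k}>0$ and $(\widehat{P}_{\mathbf{n},1},\widehat{P}_{\mathbf{n},2})=T_{\mathbf{n}}(\mathbf{P}_{\mathbf{n}})$. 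By scale-invariance,
\begin{equation*}
T_{\mathbf{n}}(\widehat{P}_{\mathbf{n},1},\widehat{P}_{\mathbf{n},2})=T_{\mathbf{n}}(a_1\widehat{P}_{\mathbf{n},1},a_2\widehat{P}_{\mathbf{n},2})=T_{\mathbf{n}}(\mathbf{P}_{\mathbf{n}})=(\widehat{P}_{\mathbf{n},1},\widehat{P}_{\mathbf{n},2}),
\end{equation*}
so $(\widehat{P}_{\mathbf{n},1},\widehat{P}_{\mathbf{n},2})$ is a fixed point of $T_{\mathbf{n}}$, hence equals $(\widetilde{Q}_{\mathbf{n},1},\widetilde{Q}_{\mathbf{n},2})$ by Theorem~\ref{puntofijo}. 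Therefore $P_{\mathbf{n},k}=a_k\widetilde{Q}_{\mathbf{n},k}$. Since $\kappa_k(\widetilde{Q}_{\mathbf{n},1},\widetilde{Q}_{\mathbf{n},2})=\widetilde{\kappa}_{\mathbf{n},k}$, applying the scaling formulas with $\lambda_k=a_k$ turns the self-consistency conditions $a_k=\kappa_k(\mathbf{P}_{\mathbf{n}})/\kappa_{\mathbf{n},k}$ into
\begin{equation*}
a_1=a_2^{1/2}\,\widetilde{\kappa}_{\mathbf{n},1}/\kappa_{\mathbf{n},1},\qquad a_2=a_1^{1/2}\,\widetilde{\kappa}_{\mathbf{n},2}/\kappa_{\mathbf{n},2},
\end{equation*}
whose unique positive solution is $(a_1,a_2)=(c_{\mathbf{n},1},c_{\mathbf{n},2})$ by elementary substitution. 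Reversing the chain of implications and noting that $\widetilde{Q}_{\mathbf{n},k}/Q_{\mathbf{n},k}>0$ on $\Delta_{k-1}\cup\Delta_{k+1}$ (both polynomials being monic of degree $N_{\mathbf{n},k}$ with all zeros in $\Delta_k$) shows that the vector in \eqref{vectorfixedpoint} is indeed a fixed point, completing both existence and uniqueness.
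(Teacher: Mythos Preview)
Your proof is correct and follows essentially the same approach as the paper's. Both arguments hinge on the scale invariance $T_{\mathbf{n}}(\lambda_1\widetilde{Q}_1,\lambda_2\widetilde{Q}_2)=T_{\mathbf{n}}(\widetilde{Q}_1,\widetilde{Q}_2)$ and the scaling identities $\kappa_1(\lambda_1\widetilde{Q}_1,\lambda_2\widetilde{Q}_2)=\lambda_2^{1/2}\kappa_1(\widetilde{Q}_1,\widetilde{Q}_2)$, $\kappa_2(\lambda_1\widetilde{Q}_1,\lambda_2\widetilde{Q}_2)=\lambda_1^{1/2}\kappa_2(\widetilde{Q}_1,\widetilde{Q}_2)$, and both reduce uniqueness to Theorem~\ref{puntofijo}; the only difference is that you isolate these two facts as general lemmas before applying them, whereas the paper performs the same computations inline for the specific vectors $(c_1\widetilde{Q}_{\mathbf{n},1},c_2\widetilde{Q}_{\mathbf{n},2})$.
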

\begin{proof}
Recall the definition of the constants $\widetilde{\kappa}_{\mathbf{n},k}, \widetilde{K}_{\mathbf{n},k}$ and the functions $\widetilde{\mathcal{H}}_{\mathbf{n},k}$ associated with the Nikishin system $\mathcal{N}(\widetilde{\sigma}_{1},\widetilde{\sigma}_{2})$. Let $\mathbf{P}_{\mathbf{n}}=(c_{1}\,\widetilde{Q}_{\mathbf{n},1},c_{2}\,\widetilde{Q}_{\mathbf{n},2})$ for some constants $c_{1}, c_{2}>0$. By the orthogonality conditions that characterize the polynomials $\widetilde{Q}_{\mathbf{n},1}$ and $\widetilde{Q}_{\mathbf{n},2}$, it is clear that
\begin{equation}\label{invTn}
T_{\mathbf{n}}(\mathbf{P}_{\mathbf{n}})=(\widetilde{Q}_{\mathbf{n},1},\widetilde{Q}_{\mathbf{n},2}).
\end{equation}
So a vector of the form
\begin{equation}\label{fpcand}
\left(c_{1}\frac{\widetilde{Q}_{\mathbf{n},1}}{Q_{\mathbf{n},1}},c_{2}\frac{\widetilde{Q}_{\mathbf{n},2}}{Q_{\mathbf{n},2}}\right), \qquad c_{1}, c_{2}>0,
\end{equation}
is a fixed point of $\widetilde{T}_{\mathbf{n}}$ if and only if
\[
\left(\frac{\kappa_{1}(\mathbf{P}_{\mathbf{n}})\,\widetilde{Q}_{\mathbf{n},1}}{\kappa_{\mathbf{n},1}\,Q_{\mathbf{n},1}},\frac{\kappa_{2}(\mathbf{P}_{\mathbf{n}})\,\widetilde{Q}_{\mathbf{n},2}}{\kappa_{\mathbf{n},2}\,Q_{\mathbf{n},2}}\right)=\left(c_{1}\frac{\widetilde{Q}_{\mathbf{n},1}}{Q_{\mathbf{n},1}},c_{2}\frac{\widetilde{Q}_{\mathbf{n},2}}{Q_{\mathbf{n},2}}\right),
\]
equivalently,
\begin{equation}\label{condcks}
c_{k}=\frac{\kappa_{k}(\mathbf{P}_{\mathbf{n}})}{\kappa_{\mathbf{n},k}},\qquad k=1, 2.
\end{equation}
Now, from \eqref{invTn}, \eqref{defKQs}, and \eqref{def:kappas2} we deduce that $K_{1}(\mathbf{P}_{\mathbf{n}})=c_{2}^{1/2}\,\widetilde{K}_{\mathbf{n},1}$ and so
\[
\kappa_{1}(\mathbf{P}_{\mathbf{n}})=K_{1}(\mathbf{P}_{\mathbf{n}})=c_{2}^{1/2}\,\widetilde{K}_{\mathbf{n},1}=c_{2}^{1/2}\,\widetilde{\kappa}_{\mathbf{n},1}.
\]
This implies by \eqref{condcks} that
\[
\frac{c_{1}}{c_{2}^{1/2}}=\frac{\widetilde{\kappa}_{\mathbf{n},1}}{\kappa_{\mathbf{n},1}}.
\]
From \eqref{invTn} and \eqref{defHqtilde} we obtain that $H_{2}(\mathbf{P}_{\mathbf{n}},z)=c_{2}^{-1}\,\widetilde{\mathcal{H}}_{\mathbf{n},2}(z)$, and so by \eqref{defKQs} we have
\[
K_{2}(\mathbf{P}_{\mathbf{n}})=(c_{1}\,c_{2})^{1/2}\widetilde{K}_{\mathbf{n},2}.
\]
Hence
\[
\kappa_{2}(\mathbf{P}_{\mathbf{n}})=\frac{K_{2}(\mathbf{P}_{\mathbf{n}})}{K_{1}(\mathbf{P}_{\mathbf{n}})}=c_{1}^{1/2} \frac{\widetilde{K}_{\mathbf{n},2}}{\widetilde{K}_{\mathbf{n},1}}=c_{1}^{1/2}\,\widetilde{\kappa}_{\mathbf{n},2}.
\]
So from \eqref{condcks} we obtain
\[
\frac{c_{2}}{c_{1}^{1/2}}=\frac{\widetilde{\kappa}_{\mathbf{n},2}}{\kappa_{\mathbf{n},2}}.
\]
We have shown that \eqref{condcks} is equivalent to
\[
\frac{c_{1}}{c_{2}^{1/2}}=\frac{\widetilde{\kappa}_{\mathbf{n},1}}{\kappa_{\mathbf{n},1}},\qquad \frac{c_{2}}{c_{1}^{1/2}}=\frac{\widetilde{\kappa}_{\mathbf{n},2}}{\kappa_{\mathbf{n},2}}.
\]
This system of equations has a unique solution given by $c_{1}=\left(\widetilde{\kappa}_{\mathbf{n},1}/\kappa_{\mathbf{n},1}\right)^{4/3}\left(\widetilde{\kappa}_{\mathbf{n},2}/\kappa_{\mathbf{n},2}\right)^{2/3}$ and  $c_{2}=\left(\widetilde{\kappa}_{\mathbf{n},1}/\kappa_{\mathbf{n},1}\right)^{2/3}\left(\widetilde{\kappa}_{\mathbf{n},2}/\kappa_{\mathbf{n},2}\right)^{4/3}$.

We have shown that among vectors of the form \eqref{fpcand}, the mapping $\widetilde{T}_{\mathbf{n}}$ has a unique fixed point which is given by \eqref{vectorfixedpoint}. Now suppose that the vector
\[
\left(c_{1}\frac{P_{\mathbf{n},1}}{Q_{\mathbf{n},1}},c_{2}\frac{P_{\mathbf{n},2}}{Q_{\mathbf{n},2}}\right)\in S_{\mathbf{n}}^{+}
\]
is a fixed point of $\widetilde{T}_{\mathbf{n}}$, where $c_{1}, c_{2}\neq 0$, and $P_{\mathbf{n},k}$ is a monic polynomial of degree at most $N_{\mathbf{n},k}$ for each $k=1, 2$. If we show that $c_{k}>0$ and $P_{\mathbf{n},k}=\widetilde{Q}_{\mathbf{n},k}$, the proof will be complete.

Being a fixed point means that
\[
\left(\frac{\kappa_{1}(\mathbf{P}_{\mathbf{n}})\,\widehat{P}_{\mathbf{n},1}}{\kappa_{\mathbf{n},1}\,Q_{\mathbf{n},1}},\frac{\kappa_{2}(\mathbf{P}_{\mathbf{n}})\,\widehat{P}_{\mathbf{n},2}}{\kappa_{\mathbf{n},2}\,Q_{\mathbf{n},2}}\right)=\left(c_{1}\frac{P_{\mathbf{n},1}}{Q_{\mathbf{n},1}},c_{2}\frac{P_{\mathbf{n},2}}{Q_{\mathbf{n},2}}\right),
\]
where $\mathbf{P}_{\mathbf{n}}=(c_{1}\,P_{\mathbf{n},1}, c_{2}\,P_{\mathbf{n},2})$ and $(\widehat{P}_{\mathbf{n},1}, \widehat{P}_{\mathbf{n},2})=T_{\mathbf{n}}(\mathbf{P}_{\mathbf{n}})$. Since $P_{\mathbf{n},k}$ is monic, it follows that $c_{k}=\kappa_{k}(\mathbf{P}_{\mathbf{n}})/\kappa_{\mathbf{n},k}>0$ and $P_{\mathbf{n},k}=\widehat{P}_{\mathbf{n},k}$ for each $k=1, 2$. It is also clear that if we define the vector $\widetilde{\mathbf{P}}_{\mathbf{n}}=(P_{\mathbf{n},1}, P_{\mathbf{n},2})$, then $T_{\mathbf{n}}(\widetilde{\mathbf{P}}_{\mathbf{n}})=T_{\mathbf{n}}(\mathbf{P}_{\mathbf{n}})$. So we have shown that $T_{\mathbf{n}}(\widetilde{\mathbf{P}}_{\mathbf{n}})=\widetilde{\mathbf{P}}_{\mathbf{n}}$, i.e., $\widetilde{\mathbf{P}}_{\mathbf{n}}$ is a fixed point of $T_{\mathbf{n}}$. By Theorem~\ref{puntofijo} it follows that $\widetilde{\mathbf{P}}_{\mathbf{n}}=(\widetilde{Q}_{\mathbf{n},1},\widetilde{Q}_{\mathbf{n},2})$.
\end{proof}

For $\delta>0$, let
\[
\Delta_{k,\delta}:=\{z\in \mathbb{C}: \mathrm{dist}(z,\Delta_{k})\leq \delta\}.
\]
Throughout the proof of Theorem \ref{theomain}, we fix $\delta>0$ small enough so that
\[
\Delta_{1,\delta}\cap\Delta_{2}=\emptyset,\qquad \Delta_{2,\delta}\cap\Delta_{1}=\emptyset.
\]
Let $\Omega_{k,\delta}:=\overline{\mathbb{C}}\setminus\Delta_{k,\delta}$, let $\Omega_{\delta}:=(\Omega_{1,\delta},\Omega_{2,\delta})$, and define $H(\Omega_{\delta})$ as the space of all vectors $(g_{1},g_{2})$ with $g_{k}$ holomorphic in $\Omega_{k,\delta}$ for all $k$. Given $\mathbf{g}=(g_{1},g_{2})\in H(\Omega_{\delta})$, let
\[
\|\mathbf{g}\|_{\Omega_{\delta}}:=\max\{\sup\{|g_{k}(z)|: z\in\Omega_{k,\delta}\}: k=1, 2\}\in[0,\infty].
\]
Let $H^{*}(\Omega_{\delta})\subset H(\Omega_{\delta})$ denote the subspace consisting of all vectors $(g_{1},g_{2})\in H(\Omega_{\delta})$ such that each $g_{k}$ takes real values on $\mathbb{R}\cap\Omega_{k,\delta}$. For $\mathbf{g}\in H^{*}(\Omega_{\delta})$, let
\[
\min_{\Delta} \mathbf{g}:= \min \{\min  \{g_k(z): z\in \Delta_{k-1}\cup\Delta_{k+1}\}: k=1, 2\}.
\]
Recall that the components of the vector $\Phi=(\Phi_{1},\Phi_{2})$ are Szeg\H{o} functions on the regions $\overline{\mathbb{C}}\setminus\Delta_{k}$. Fix $C>0$ so that $C\geq 2\,\|\Phi\|_{\Omega_{\delta}}$ and $C^{-1}\leq \frac{1}{2}\min_{\Delta}\Phi$. We define the space
\[
H^{*}(\Omega_{\delta},C):=\{\mathbf{g}\in H^{*}(\Omega_{\delta}): \|\mathbf{g}\|_{\Omega_{\delta}}\leq C,\,\,\min_{\Delta} \mathbf{g}\geq C^{-1}\}.
\]
Obviously we have $\Phi\in H^{*}(\Omega_{\delta},C)$. Note also that $H^{*}(\Omega_{\delta},C)\subset \mathbf{C}^{+}_{\Delta}$.

Let $(\textbf{g}_n)_{n\in\mathbb{N}}$ be an arbitrary sequence of elements in $H^{*}(\Omega_{\delta},C)$. For each $k=1, 2$, the sequence of functions made up by the $k$-th component of $\textbf{g}_n$ is uniformly bounded on $\Omega_{k,\delta}$. Therefore, by Montel's theorem, there exists $\mathbb{I}\subset\mathbb{N}$ such that $(\textbf{g}_n)_{n\in\mathbb{I}}$ converges componentwise to some vector function $\mathbf{g}$ uniformly on compact subsets of the corresponding domains. It follows that the $k$-th component of $\mathbf{g}$ is holomorphic in $\Omega_{k,\delta}$ and real-valued on $\Omega_{k,\delta}\cap\mathbb{R}$. Additionally, it is clear that
\[
\min_{\Delta}\mathbf{g}\geq C^{-1},\qquad \|\mathbf{g}\|_{\Omega_{\delta}}\leq C,
\]
so $\mathbf{g}\in H^{*}(\Omega_{\delta},C)$. In conclusion, $H^{*}(\Omega_{\delta},C)$ is compact as a subset of $H(\Omega_{\delta})$ endowed with the metrizable topology of uniform convergence on compact sets.

For each fixed $\mathbf{n}$, the mapping $\widetilde{T}_{\mathbf{n}}$ defined in \eqref{defTntilde} is continuous with respect to the topology in $S_{\mathbf{n}}^{+}\subset H(\Omega_{\delta})$ of uniform convergence on compact sets. To justify this point, observe first that the terms $\kappa_{\mathbf{n},1}\,Q_{\mathbf{n},1},\kappa_{\mathbf{n},2}\,Q_{\mathbf{n},2}$ in the denominator in \eqref{defTntilde} remain fixed, so the continuity of $\widetilde{T}_{\mathbf{n}}$ would follow from the continuity of the mapping $T_{\mathbf{n}}$ defined in \eqref{defmapTn} as well as the continuity of the map $\widetilde{\mathbf{Q}}\in\mathcal{P}_{\mathbf{n}}\mapsto (\kappa_{1}(\widetilde{\mathbf{Q}}),\kappa_{2}(\widetilde{\mathbf{Q}}))$. Using the well-known determinantal formula that expresses an orthogonal polynomial in terms of the moments of the orthogonality measure, it easily follows from formulas \eqref{ortogtilde} and \eqref{defHqtilde} that the polynomials $(\widehat{Q}_{1},\widehat{Q}_{2})=T_{\mathbf{n}}(\widetilde{Q}_{1},\widetilde{Q}_{2})$ and the functions in \eqref{defHqtilde} depend continuously on the coefficients of the polynomials in $\widetilde{\mathbf{Q}}=(\widetilde{Q}_{1},\widetilde{Q}_{2})$, and therefore by \eqref{defKQs} and \eqref{def:kappas2} the same is true for the constants $\kappa_{1}(\widetilde{\mathbf{Q}}),\kappa_{2}(\widetilde{\mathbf{Q}})$.

\begin{proof}[\textbf{\emph{Proof of Theorem~\ref{theomain}:}}] Fix an arbitrary $\theta>0$, and let
\[
\mathcal{A}(\theta):=\{\mathbf{g}\in H^{*}(\Omega_{\delta},C): \|\mathbf{g}-\Phi\|_{\Delta}\leq \theta\}.
\]
For $\varepsilon>0$ we also define
\[
\mathcal{B}(\varepsilon):=\{\mathbf{g}\in H^{*}(\Omega_{\delta},C): \mathrm{d}(\mathbf{g},\Phi)\leq \varepsilon\}.
\]
There exists $\varepsilon_{0}>0$ such that
\[
\mathcal{B}(\varepsilon_{0})\subset\mathcal{A}(\theta),
\]
otherwise, we could find a sequence of vector functions in $H^{*}(\Omega_{\delta},C)\subset \mathbf{C}_{\Delta}^{+}$ that converges to $\Phi$ in the $\mathrm{d}$-metric but not in the $\|\cdot\|_{\Delta}$-norm which would contradict \eqref{equivconv}.

Let $\Lambda\subset\mathbb{Z}_{+}^{2}(\circledast)$ be an infinite sequence of distinct multi-indices $\mathbf{n}=(n_{1},n_{2})$ such that $\sup_{\mathbf{n}\in\Lambda}(n_{1}-n_{2})<\infty$. Let $0<\varepsilon\leq \varepsilon_{0}$ be fixed, and consider
\[
\mathcal{B}(\varepsilon,\mathbf{n}):=\mathcal{B}(\varepsilon)\cap S^{+}_{\mathbf{n}},\qquad \mathbf{n}\in\Lambda.
\]
We show now that $\mathcal{B}(\varepsilon,\mathbf{n})$ is non-empty for all $\mathbf{n}\in\Lambda$ with norm sufficiently large. Recall that $Q_{\mathbf{n},k}$ is the monic orthogonal polynomial of degree $N_{\mathbf{n},k}$ with respect to the varying measure \eqref{varyingmeasqnk} on $\Delta_{k}$. Let $P_{\mathbf{n},k}$ denote the monic orthogonal polynomial of degree $N_{\mathbf{n},k}$ with respect to the perturbed measure
\[
\frac{\rho_{k}(x)}{\Phi_{k-1}(x)\,\Phi_{k+1}(x)}\,\frac{|h_{\mathbf{n},k}(x)|\,d\sigma_{k}(x)}{|Q_{\mathbf{n},k-1}(x)\,Q_{\mathbf{n},k+1}(x)|}.
\]
Applying Lemmas \ref{relescalar} and \ref{limitehk}, for each $k=1, 2$ we obtain
\begin{equation}\label{convergaux}
\lim_{\mathbf{n}\in\Lambda}\frac{P_{\mathbf{n},k}(z)}{Q_{\mathbf{n},k}(z)}=\frac{\mathsf{G}_{\Delta_{k}}(\rho_{k}/\Phi_{k-1}\Phi_{k+1};z)}{\mathsf{G}_{\Delta_{k}}(\rho_{k}/\Phi_{k-1}\Phi_{k+1};\infty)}=\frac{\Phi_{k}(z)}{\Phi_{k}(\infty)}
\end{equation}
uniformly on compact subsets of $\overline{\mathbb{C}}\setminus\Delta_{k}$. In deriving this it is important to observe that $N_{\mathbf{n},k}$ tends to infinity as $\mathbf{n}$ progresses along the sequence $\Lambda$. It is clear that
\[
\widetilde{\textbf{g}}_{\textbf{n}}:=\left(\frac{\Phi_{1}(\infty) P_{\mathbf{n},1}}{Q_{\mathbf{n},1}},\frac{\Phi_{2}(\infty) P_{\mathbf{n},2}}{Q_{\mathbf{n},2}}\right)\in S_{\mathbf{n}}^{+}\subset \textbf{C}_{\Delta}^{+}
\]
and $\lim_{\mathbf{n}\in\Lambda}\mathrm{d}(\widetilde{\mathbf{g}}_{\mathbf{n}},\Phi)=0$. The convergence \eqref{convergaux} also implies that $\widetilde{\mathbf{g}}_{\mathbf{n}}\in H^{*}(\Omega_{\delta},C)$ for all $\mathbf{n}\in\Lambda$ with norm sufficiently large. Therefore,
there exists $n_{0}>0$ such that $\mathcal{B}(\varepsilon,\mathbf{n})\neq \emptyset$ for $\mathbf{n}\in\Lambda$, $|\mathbf{n}|>n_{0}$.

We prove now that for each $\mathbf{n}\in\Lambda$, $\mathcal{B}(\varepsilon,\textbf{n})$ is compact. Since $\mathcal{B}(\varepsilon,\textbf{n})\subset H^{*}(\Omega_{\delta},C)$ and $H^{*}(\Omega_{\delta},C)$ is compact, it suffices to show that $\mathcal{B}(\varepsilon,\textbf{n})$ is closed. Consider a sequence
\[
(\mathbf{g}_{r})_{r\in\mathbb{N}}=\left(\frac{ P_{r,1}}{Q_{\mathbf{n},1}},\frac{ P_{r,2}}{Q_{\mathbf{n},2}}\right)_{r\in\mathbb{N}}\subset\mathcal{B}(\varepsilon,\mathbf{n})
\]
that converges uniformly on compact sets to $\mathbf{g}\in H^{*}(\Omega_{\delta},C)$. It is clear that $\mathrm{d}(\mathbf{g},\Phi)\leq \varepsilon$, so $\mathbf{g}\in\mathcal{B}(\varepsilon)$. In the finite dimensional space of vector polynomials $(q_{1},q_{2})$ with $\deg(q_{k})\leq N_{\mathbf{n},k}$, we consider the norm
\[
\|(q_{1},q_{2})\|=\left\|\left(\frac{ q_{1}}{Q_{\mathbf{n},1}},\frac{q_{2}}{Q_{\mathbf{n},2}}\right)\right\|_{\Delta}.
\]
With respect to this norm, the sequence $\{(P_{r,1},P_{r,2})\}_{r\in\mathbb{N}}$ is bounded, so there is a subsequence that converges to a vector of polynomials $(P_{1},P_{2})$ and this implies
\[
\mathbf{g}=\left(\frac{ P_{1}}{Q_{\mathbf{n},1}},\frac{ P_{2}}{Q_{\mathbf{n},2}}\right)\in S_{\mathbf{n}}^{+}.
\]
(Note that $\deg P_{k}\leq N_{\mathbf{n},k}$ for each $k$ and $\min_{\Delta}\mathbf{g}\geq C^{-1}$.) This shows that $\mathbf{g}\in\mathcal{B}(\varepsilon,\mathbf{n})$ and so $\mathcal{B}(\varepsilon,\mathbf{n})$ is compact.

Next we show that $\mathcal{B}(\varepsilon,\mathbf{n})$ is convex. Take two arbitrary vector functions
\[
\mathbf{r}_{j}=\left(\frac{P_{j,1}}{Q_{\mathbf{n},1}},\frac{P_{j,2}}{Q_{\mathbf{n},2}}\right)\in\mathcal{B}(\varepsilon,\mathbf{n}),\qquad j=1, 2,
\]
and let $0\leq \alpha\leq 1$. Clearly, for each $k=1, 2$, the polynomial $\alpha P_{1,k}+(1-\alpha) P_{2,k}$ has real coefficients, has degree at most $N_{\mathbf{n},k}$, and $(\alpha P_{1,k}+(1-\alpha) P_{2,k})/Q_{\mathbf{n},k}>0$ on $\Delta_{k-1}\cup\Delta_{k+1}$. Hence $\alpha\,\mathbf{r}_{1}+(1-\alpha)\,\mathbf{r}_{2}\in S_{\mathbf{n}}^{+}$. It is also immediate to check that $\alpha\,\mathbf{r}_{1}+(1-\alpha)\,\mathbf{r}_{2}\in H^{*}(\Omega_{\delta},C)$. So it remains to prove that
\begin{equation}\label{convexityineq}
\left\|\log\left(\frac{\alpha\,P_{1,k}+(1-\alpha)\,P_{2,k}}{Q_{\mathbf{n},k}\,\Phi_{k}}\right)\right\|_{\Delta_{k-1}\cup\Delta_{k+1}}\leq \varepsilon.
\end{equation}
For each $x\in\Delta_{k-1}\cup\Delta_{k+1}$ fixed, $(\alpha P_{1,k}(x)+(1-\alpha) P_{2,k}(x))/Q_{\mathbf{n},k}(x)$ is a value between $P_{1,k}(x)/Q_{\mathbf{n},k}(x)$ and $P_{2,k}(x)/Q_{\mathbf{n},k}(x)$. Hence
\begin{gather*}
0<\min\left\{\frac{  P_{1,k}(x)}{Q_{\mathbf{n},k}(x)\,\Phi_{k}(x)},\frac{ P_{2,k}(x)}{Q_{\mathbf{n},k}(x)\,\Phi_{k}(x)}\right\}\leq \frac{ \alpha P_{1,k}(x)+(1-\alpha) P_{2,k}(x)}{Q_{\mathbf{n},k}(x)\,\Phi_{k}(x)}\\
\leq \max\left\{\frac{  P_{1,k}(x)}{Q_{\mathbf{n},k}(x)\,\Phi_{k}(x)},\frac{ P_{2,k}(x)}{Q_{\mathbf{n},k}(x)\,\Phi_{k}(x)}\right\}.
\end{gather*}
Using the monotonicity of the logarithm, \eqref{convexityineq} readily follows.

Now we show that there exists $\varepsilon' > 0$ such that for all $\mathbf{n} \in \Lambda$ with $|\mathbf{n}|$ sufficiently large, the mapping $\widetilde{T}_{\textbf{n}}$ defined in \eqref{defTntilde} satisfies
\begin{equation}\label{inclusion2}
\widetilde{T}_{\textbf{n}}(\mathcal{B}(\varepsilon',\mathbf{n}))\subset H^{*}(\Omega_{\delta},C).
\end{equation}
Indeed, if this is not true, there exists   a sequence
$(\mathbf{g}_{\bf n})_{{\bf n}\in \Lambda'}, \Lambda' \subset \Lambda$, $\mathbf{g}_{\bf n} \in S_{\mathbf{n}}^{+}\cap H^{*}(\Omega_{\delta},C)$,
such that
\[ \lim_{{\bf n} \in \Lambda'} \mathrm{d}(\mathbf{g}_{\bf n}, \Phi) = 0, \qquad \widetilde{T}_{\bf n}(\mathbf{g}_{\bf n}) \notin H^*(\Omega_\delta, C), \qquad {\bf n} \in \Lambda'.
\]
That is
\begin{equation} \label{inequalities} \| \widetilde{T}_{\bf n}(\mathbf{g}_{\bf n})\|_{\Omega_\delta}  > C, \quad \mbox{or} \quad \min_{\Delta}  \widetilde{T}_{\bf n}(\mathbf{g}_{\bf n}) < C^{-1}, \qquad {\bf n} \in \Lambda'.
\end{equation}
 The zeros of the $k$-th component of ${\bf g}_{\bf n}$ remain uniformly bounded away from $\Delta_{k-1} \cup \Delta_{k+1}$ for ${\bf n} \in \Lambda'$ since the $k$-th component $\Phi_k$ of $\Phi$ has no zero on that set. Therefore, we can apply Lemma \ref{relvectorial} and from \eqref{limfund3} and \eqref{equivconv} it follows that
\[ \lim_{{\bf n} \in \Lambda'} \widetilde{T}_{\bf n}({\bf g}_{\bf n}) = T(\Phi) = \Phi,
\]
componentwise and uniformly on each compact subset of $\Omega=(\Omega_{1},\Omega_{2})$, where $\Omega_{k}=\overline{\mathbb{C}}\setminus\Delta_{k}$. This implies that
\[ \lim_{{\bf n} \in \Lambda'}
\|\widetilde{T}_{\bf n}({\bf g}_{\bf n})\|_{\Omega_\delta} = \|\Phi\|_{\Omega_{\delta}} < C \qquad \mbox{and} \qquad
 \lim_{{\bf n} \in \Lambda'}
\min_\Delta \widetilde{T}_{\bf n}({\bf g}_{\bf n})  = \min_{\Delta} \Phi > C^{-1}
\]
which contradicts both inequalities in \eqref{inequalities}. Therefore, \eqref{inclusion2} holds. In what follows, we assume that the constant $\varepsilon$ that was previously fixed is chosen so that
\begin{equation}\label{newinclusion}
\widetilde{T}_{\textbf{n}}(\mathcal{B}(\varepsilon,\mathbf{n}))\subset H^{*}(\Omega_{\delta},C)
\end{equation}
takes place for all $\mathbf{n}\in\Lambda$ with $|\mathbf{n}|$ sufficiently large.

We show now that
\begin{equation}\label{inclusion}
\widetilde{T}_{\textbf{n}}(\mathcal{B}(\varepsilon,\mathbf{n}))\subset\mathcal{B}(\varepsilon,\mathbf{n})
\end{equation}
for $|\mathbf{n}|$ large enough. If we assume the contrary, since \eqref{newinclusion} takes place, there exists a sequence
$(\mathbf{g}_{\bf n})_{{\bf n}\in \Lambda'}, \Lambda' \subset \Lambda$, $\mathbf{g}_{\bf n} \in \mathcal{B}(\varepsilon,{\bf n})$
such that
\begin{equation} \label{paracontra}
\mathrm{d}(\widetilde{T}_{\bf n}(\mathbf{g}_{\bf n}  ),\Phi ) = \mathrm{d}(\widetilde{T}_{\bf n}(\mathbf{g}_{\bf n}  ),T(\Phi)) > \varepsilon\,, \qquad {\bf n} \in \Lambda'\,.
\end{equation}
However, the sequence $(\mathbf{g}_{\bf n})_{{\bf n}\in \Lambda'}$ is uniformly bounded on each compact subset of $\Omega_{\delta}$; therefore, there exists a subsequence of multi-indices $\Lambda'' \subset \Lambda'$ and $\mathbf{g} \in
\mathcal{B}(\varepsilon)$ such that $(\mathbf{g}_{\bf n})_{{\bf n}\in \Lambda''}$ converges to ${\bf g}$ uniformly on each compact subset of $\Omega_{\delta}$.  In particular,  $\lim_{{\bf n} \in \Lambda''} \mathrm{d}(\mathbf{g}_{\mathbf{n}}, \mathbf{g}) = 0$. Notice that for each $k=1, 2$, the zeros of the $k$-th component of ${\bf g}_{\bf n}$ remain uniformly bounded away from $\Delta_{k-1} \cup \Delta_{k+1}$ for ${\bf n} \in \Lambda''$ since the $k$-th component of $\bf g$ has no zero on this set (observe that $\min_{\Delta}\mathbf{g}_{\mathbf{n}}\geq C^{-1}$ for all $\mathbf{n}\in\Lambda'$). Therefore, we can apply Lemma \ref{relvectorial} and from \eqref{limfund3} and \eqref{equivconv} it follows that
\begin{equation}\label{limpotencia}
\lim_{{\bf n} \in \Lambda''} \mathrm{d}(\widetilde{T}_{\bf n}({\bf g}_{\bf n}),T({\bf g})) = 0.
\end{equation}
The triangular inequality and \eqref{contractionprop} imply
\[ \mathrm{d}(\widetilde{T}_{\bf n}({\bf g}_{\bf n}), \Phi) \leq   \mathrm{d}(\widetilde{T}_{\bf n}({\bf g}_{\bf n}), T({\bf g})) + \mathrm{d}({T}({\bf g}), T(\Phi)) \leq  \mathrm{d}(\widetilde{T}_{\bf n}({\bf g}_{\bf n}), T({\bf g})) +
\frac{\varepsilon}{2}, \quad {\bf n} \in \Lambda.
\]
Using \eqref{limpotencia}, we can make the first term after the last inequality $<\varepsilon/2$ for all ${\bf n} \in \Lambda''$ with $|{\bf n}|$ sufficiently large, say $|{\bf n}| > N$. Therefore,
\[ \mathrm{d}(\widetilde{T}_{\bf n}({\bf g}_{\bf n}), \Phi) < \varepsilon, \qquad {\bf n} \in \Lambda'', \quad
|{\bf n}| > N.\]
But then, for such values of ${\bf n} \in \Lambda''$  we get a contradiction with \eqref{paracontra}. Consequently, \eqref{inclusion} takes place
for all $ {\bf n} \in \Lambda$ with $|{\bf n}|$ sufficiently large.

Summarizing, we have proved that for all ${\bf n} \in \Lambda$ with $|{\bf n}| \geq n_0$, $n_0$ sufficiently large, $\mathcal{B}(\varepsilon,{\bf n})$ is a nonempty, convex, compact set (in the topology of uniform convergence on compact sets in $H(\Omega_{\delta})$) such that \eqref{inclusion} holds. Since $\widetilde{T}_{\mathbf{n}}$ is continuous, by the generalized Brouwer fixed point theorem (see \cite[Theorem V.19]{ReedSimon}), $\widetilde{T}_{\bf n}$ has a fixed point in $\mathcal{B}(\varepsilon, {\bf n})$ for all $\mathbf{n}\in\Lambda$ with $|\mathbf{n}|$ large enough. But we know by Theorem~\ref{fixedpointTntilde} that $\widetilde{T}_{\mathbf{n}}$ has a unique fixed point in the entire space $S_{\mathbf{n}}^{+}$, which is given by the vector
\begin{equation}\label{def:gnstar}
\mathbf{g}_{\mathbf{n}}^{*}=\left(c_{\mathbf{n},1}\frac{\widetilde{Q}_{\mathbf{n},1}}{Q_{\mathbf{n},1}},c_{\mathbf{n},2}\frac{\widetilde{Q}_{\mathbf{n},2}}{Q_{\mathbf{n},2}}\right),
\end{equation}
where
$
c_{\mathbf{n},1}=\left(\widetilde{\kappa}_{\mathbf{n},1}/\kappa_{\mathbf{n},1}\right)^{4/3}\left(\widetilde{\kappa}_{\mathbf{n},2}/\kappa_{\mathbf{n},2}\right)^{2/3}$ and  $c_{\mathbf{n},2}=\left(\widetilde{\kappa}_{\mathbf{n},1}/\kappa_{\mathbf{n},1}\right)^{2/3}\left(\widetilde{\kappa}_{\mathbf{n},2}/\kappa_{\mathbf{n},2}\right)^{4/3}
$. Hence $\mathbf{g}_{\mathbf{n}}^{*}\in\mathcal{B}(\varepsilon,\mathbf{n})$ for all $\mathbf{n}\in\Lambda$ with $|\mathbf{n}|$ large enough.

We have shown that for each $\theta > 0$ there exists $\varepsilon > 0$ and $n_1 \in \mathbb{N}$
 such that for all ${\bf n} \in \Lambda$, $|{\bf n}| > n_1$, the vector defined in \eqref{def:gnstar} satisfies $\mathbf{g}_{\mathbf{n}}^{*}\in\mathcal{B}(\varepsilon,\mathbf{n})\subset\mathcal{A}(\theta)$. It follows that
\[ \lim_{{\bf n} \in \Lambda} \| \mathbf{g}_{\bf n}^*    - \Phi \|_{\Delta} = 0.
\]
Then, since $\mathbf{g}_{\mathbf{n}}^{*}$ is a fixed point of $\widetilde{T}_{\mathbf{n}}$ and $\Phi$ is a fixed point of $T$, applying Lemma \ref{relvectorial} we obtain
\begin{equation}\label{limitfinal}
\lim_{\mathbf{n} \in \Lambda}  c_{\mathbf{n},k}\,\frac{\widetilde{Q}_{{\bf n},k}(z)}{Q_{{\bf n},k}(z)} = \Phi_k(z), \qquad k=1, 2,
\end{equation}
uniformly on compact subsets of $\overline{\mathbb{C}} \setminus \Delta_k$. In particular we have
\begin{equation}\label{asympcnk}
\lim_{\mathbf{n} \in \Lambda}c_{\mathbf{n},k}=\Phi_{k}(\infty),\qquad k=1, 2,
\end{equation}
and therefore the limit \eqref{limfund1} holds.

Since $\widetilde{\kappa}_{\mathbf{n},1}/\kappa_{\mathbf{n},1}=c_{\mathbf{n},1}\,c_{\mathbf{n},2}^{-1/2}$ and $\widetilde{\kappa}_{\mathbf{n},2}/\kappa_{\mathbf{n},2}=c_{\mathbf{n},2}\,c_{\mathbf{n},1}^{-1/2}$, from \eqref{limitfinal}, \eqref{asympcnk}, and \eqref{def:littleqnk} we also deduce that
\begin{equation}\label{limratiokappas}
\lim_{\mathbf{n}\in\Lambda}\frac{\widetilde{\kappa}_{\mathbf{n},1}}{\kappa_{\mathbf{n},1}}=\frac{\Phi_{1}(\infty)}{\Phi_{2}(\infty)^{1/2}},\qquad \lim_{\mathbf{n}\in\Lambda}\frac{\widetilde{\kappa}_{\mathbf{n},2}}{\kappa_{\mathbf{n},2}}=\frac{\Phi_{2}(\infty)}{\Phi_{1}(\infty)^{1/2}},
\end{equation}
and
\[
\lim_{\mathbf{n}\in\Lambda}\frac{\widetilde{q}_{\mathbf{n},1}(z)}{q_{\mathbf{n},1}(z)}=\frac{\Phi_{1}(z)}{\Phi_{2}(\infty)^{1/2}},\qquad \lim_{\mathbf{n}\in\Lambda}\frac{\widetilde{q}_{\mathbf{n},2}(z)}{q_{\mathbf{n},2}(z)}=\frac{\Phi_{2}(z)}{\Phi_{1}(\infty)^{1/2}}.
\]
From \eqref{defKnk} and \eqref{def:kappas} it follows that
\[
\frac{\widetilde{K}_{\mathbf{n},k}}{K_{\mathbf{n},k}}=\prod_{j=1}^{k}\frac{\widetilde{\kappa}_{\mathbf{n},j}}{\kappa_{\mathbf{n},j}},\qquad k=1, 2,
\]
and, therefore, from \eqref{limratiokappas} we obtain
\begin{equation}\label{limrelKnks}
\lim_{\mathbf{n}\in\Lambda}\frac{\widetilde{K}_{\mathbf{n},1}}{K_{\mathbf{n},1}}=\frac{\Phi_{1}(\infty)}{\Phi_{2}(\infty)^{1/2}},\qquad \lim_{\mathbf{n}\in\Lambda}\frac{\widetilde{K}_{\mathbf{n},2}}{K_{\mathbf{n},2}}=(\Phi_{1}(\infty)\,\Phi_{2}(\infty))^{1/2}.
\end{equation}
A combination of \eqref{Hnk}, \eqref{def:littlehnk}, and simple algebra gives the identity
\[
\frac{\widetilde{\Psi}_{\mathbf{n},k}(z)}{\Psi_{\mathbf{n},k}(z)}=
\frac{\widetilde{Q}_{\mathbf{n},k+1}(z)}{Q_{\mathbf{n},k+1}(z)}\,
\frac{Q_{\mathbf{n},k}(z)}{\widetilde{Q}_{\mathbf{n},k}(z)}\,
\frac{K_{\mathbf{n},k}^{2}}{\widetilde{K}_{\mathbf{n},k}^{2}}\,
\frac{\widetilde{\varepsilon}_{\mathbf{n},k}\,
\widetilde{h}_{\mathbf{n},k+1}(z)}{\varepsilon_{\mathbf{n},k}\,h_{\mathbf{n},k+1}(z)}\,
\frac{\widetilde{\varepsilon}_{\mathbf{n},k}}{\varepsilon_{\mathbf{n},k}},\qquad k=1, 2,
\]
where $\varepsilon_{\mathbf{n},k}$ and $\widetilde{\varepsilon}_{\mathbf{n},k}$ are the signs of the measures \eqref{varyingmeasqnk:2} and
\begin{equation}\label{varyingmeasqnk:3}
{\widetilde{h}_{{\bf n},k}(x)\,\rho_{k}(x)\,d\sigma_k(x)}/{\widetilde{Q}_{{\bf n},k-1}(x)\,\widetilde{Q}_{{\bf n},k+1}(x)},
\end{equation}
respectively. Recall that $\widetilde{Q}_{\mathbf{n},3}\equiv Q_{\mathbf{n},3}\equiv 1$.

The measures $\sigma_{k}$ and $\rho_{k}\,\sigma_{k}$ are positive, and the polynomials $\widetilde{Q}_{\mathbf{n},k}$ and $Q_{\mathbf{n},k}$ have the same degree and their zeros are located on the same interval $\Delta_{k}$. As a consequence of these facts and the integral representations \eqref{entreHnks}, it is easy to see that the measures \eqref{varyingmeasqnk:2} and \eqref{varyingmeasqnk:3} have the same sign, i.e. $\varepsilon_{\mathbf{n},k}=\widetilde{\varepsilon}_{\mathbf{n},k}$ for each $k=1, 2$ and $\mathbf{n}\in\Lambda$. Then, applying \eqref{limfund1}, \eqref{limrelKnks}, \eqref{limenkhnk}, and \eqref{convhkfunc}, we finally obtain the limits
\begin{align*}
\lim_{\mathbf{n}\in\Lambda}\frac{\widetilde{\Psi}_{\mathbf{n},1}(z)}{\Psi_{\mathbf{n},1}(z)} & =\frac{1}{\Phi_{1}(\infty)}\frac{\Phi_{2}(z)}{\Phi_{1}(z)},\qquad z\in\overline{\mathbb{C}}\setminus(\Delta_{1}\cup\Delta_{2}),\\
\lim_{\mathbf{n}\in\Lambda}\frac{\widetilde{\Psi}_{\mathbf{n},2}(z)}{\Psi_{\mathbf{n},2}(z)}& =\frac{1}{\Phi_{1}(\infty)\,\Phi_{2}(z)},\qquad z\in\overline{\mathbb{C}}\setminus\Delta_{2},
\end{align*}
Therefore, \eqref{limfund2} and \eqref{limfund4} are justified and we are done. \end{proof}

Regarding the question posed in the introduction relative to the generalization of Theorem \ref{theomain} to the case of $m>2$ generating measures, the situation is the following. The scheme of the proof seems to work well. The operator
$T$ ceases to be contractive but $T^{\overline{m}}$, $\overline{m} = \lceil m/2 \rceil$, is contractive (see \cite[Theorem 1.6]{luisyo2}) and Brouwer's theorem can be applied to prove that $\widetilde{T}_{\mathbf{n}}^{\overline{m}}$ has fixed points in all sufficiently small neighborhoods of the fixed point of $T$. Unfortunately, such fixed points may not be unique and we have not been
able to find their link with  the analogue of \eqref{def:gnstar} in the general case.

\end{document}